\newtheorem{thm}{Theorem}[section]
\newtheorem{lem}[thm]{Lemma}
\newtheorem{prop}[thm]{Proposition}
\theoremstyle{definition}
\theoremstyle{remark}
\newtheorem{rem}[thm]{Remark}
\begin{document}

\title[Pointwise Schauder for Optimal Transport]{Pointwise Schauder estimates for optimal transport maps of rough densities}

\author{Arghya Rakshit}
\address{Department of Mathematics, UC Irvine}
\email{\tt arakshit@uci.edu}
\subjclass[2020]{
49Q22, 
35J96. 
}

\keywords{Optimal transportation, Monge-Amp\`{e}re equations, regularity theory, Schauder estimate}
\date{\today}

\begin{abstract}
We prove a pointwise $C^{2,\,\alpha}$ estimate for the potential of the optimal transport map in the case that the densities are only close to constant in a certain $L^p$ sense.
\end{abstract}
\maketitle



\section{Introduction}\label{Pri}

The theory of optimal transport has been of mathematical interest for centuries. One way to attack the regularity theory for optimal transport is by Caffarelli's approach using the theory of Monge-Amp\`{e}re equations (see \cite{LC1,LC2,LC3,LC4}). In this paper,  we take a variational approach that was developed by Goldman and Otto (see \cite{GO}) in  the context of optimal transport. The general viewpoint of regularity of the optimal transport map using variational techniques has been used in \cite{GHO,GO,
LK,KO,OPR}. The variational framework employed here parallels the foundational ideas of De Giorgi to study minimal surfaces (see \cite{EG}).

We want to prove a pointwise Schauder estimate for the potential of the optimal transport map. Pointwise Schauder estimates have appeared at least since the work of Caffarelli on fully nonlinear equations (see \cite[Theorem~8.1]{CC}, \cite[Theorem~2 and Theorem~3]{LC5}). His regularity theory of Monge-Amp\`{e}re equations is done in a pointwise sense (see \cite{LC1}, \cite{LC4}, \cite{LC5}). Savin studied the boundary regularity of the Monge-Amp\`{e}re equation in a pointwise sense (see \cite{OS}).  In this paper, we prove a pointwise regularity estimate for the potential of the optimal transport map. One can view this as a finer version of Schauder estimates in the sense that we can get a Taylor expansion at a point assuming only appropriate regularity hypotheses at that point.\\

We begin by fixing some notation.  Let $B_r$ denote an open ball of radius $r$ in $\mathbb{R}^n$ centered at the origin, where $n\geq 2$.  For any measurable set $A\subset \mathbb{R}^n$, let $|A|$ denote its Lebesgue measure. We define 
$$\fint_A f:=\frac{1}{|A|}\int_A f$$
for any integrable function $f$ and any measurable set $A\subset\mathbb{R}^n$.
For any mass density $\rho:\mathbb{R}^n \to \mathbb{R}$ and $r > 0$, define $||\rho||_{p,r}$ as follows:
$$||\rho||_{p,r}:=\left(\fint_{B_r}|\rho-1|^p\right)^{\frac{1}{p}}.$$   
In this paper, we fix $p>n$. We also fix $\alpha \in (0,1)$.

Our setting is as follows. Suppose $\rho_0$ is a mass density supported in $B_1\subset \mathbb{R}^n$ and $\int \rho_0=|B_1|$. Let $\rho_1=\chi_{E}$ with $|E|=|B_1|$, where $\chi_E$ denotes the characteristic function of $E$. Assume $C_0^{-1} \leq \rho_0\leq C_0$ for some $C_0>0$ and $E$ contains $B_{1/2}$.
Let $T$ be the optimal transport map with quadratic cost that takes $\rho_0$ to $\rho_1$.
By a result of Brenier (see \cite{Brenier}), such a map $T$ exists. Moreover, $T=\nabla u$ for some convex function $u$. 
We refer to $u$ as the potential of the optimal transport map. We define excess energy $\mathcal{E}$ as follows:
$$\mathcal{E}(\rho_0,\rho_1, T, R):=\frac{1}{R^2}\fint_{B_R}|T-x|^2\rho_0\,dx.$$
A constant is called universal if it depends only on $n,\,p,\,\alpha,$ and $C_0$. We say that positive quantities $a$ and $b$ satisfy $a \lesssim b$ if $\frac{a}{b}$ is bounded above by a universal constant. By the notation $a\ll 1$, we mean there exists some small universal constant $\delta>0$ , such that if $a\leq \delta$, then the conclusions hold.

Our aim is to produce a pointwise quadratic approximation for $u$:
\begin{thm}\label{Main}
Let $\rho_0$ and $\rho_1$ be two densities described as above, and let $T=\nabla u$ be the optimal transport map that takes $\rho_0$ to $\rho_1$. There exists a universal constant
$\epsilon_1>0$ such that if 
 $\rho_0$ satisfies the following measure-theoretic inequality:
\begin{equation}\label{decay}
||\rho_0||_{p,r} \leq \sqrt{\epsilon_1}r^\alpha ,
\end{equation}
for all $0 < r < 1$, and 

    $$\mathcal{E}(\rho_0,\rho_1,T, 1)+||\rho_0||_{p,1}^2+||\rho_1||_{p,1}^2 \leq \epsilon_1, $$
then there exists a quadratic polynomial $Q$ such that, for all $0<r<\frac{1}{2}$, $$\fint_{B_r}|u-Q|^2\lesssim r^{2(2+\alpha)}.$$ Moreover, for all $0<r<\frac{1}{2}$,
\begin{equation}\label{mainapp}
||u-Q||_{L^\infty(B_{\frac{r}{2}})}\lesssim r^{2+\frac{4\alpha}{4+n}}.
\end{equation}
\end{thm}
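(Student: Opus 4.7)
My plan is to run a Campanato-type iteration in the spirit of the variational framework of Goldman--Otto. At dyadic scales $r_k := \theta^k$ (for some small universal $\theta\in(0,1/2)$), I will produce quadratic polynomials $Q_k$ approximating $u$ such that the rescaled excess decays geometrically. The pointwise measure-theoretic hypothesis \eqref{decay} is engineered so that the rescaled $L^p$ deviation of $\rho_0$ at scale $r_k$ is exactly of order $r_k^\alpha$; squared, this matches the rate $r_k^{2\alpha}$ of the one-step error. Moreover, since $B_{1/2}\subset E$ and $\rho_1=\chi_E$, the target density is identically $1$ in a neighborhood of $T(0)$, so at sufficiently small scales the $\rho_1$ contribution to the one-step error vanishes and only the $\rho_0$ term remains active. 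The main obstacle is the one-step improvement itself: engineering it so that the $L^p$ smallness of the densities enters at exactly the quadratic order with the correct constant, and verifying that the rescaled transport problem at each step satisfies its hypotheses. Once this is in hand the remaining ingredients are essentially routine.

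The one-step lemma---which I would take from the preceding sections---asserts that under the global smallness hypothesis there exists a quadratic polynomial $P$ with $\det D^2P=1$ such that $\nabla P$ well-approximates $T$, and the rescaled excess on $B_\theta$ is bounded by
\[
\tfrac12\,\mathcal{E}(\rho_0,\rho_1,T,1) \;+\; C\bigl(||\rho_0||_{p,1}^2+||\rho_1||_{p,1}^2\bigr),
\]
with $|D^2P - I| + |\nabla P(0)|\lesssim \sqrt{\mathcal{E}}+||\rho_0||_{p,1}+||\rho_1||_{p,1}$. Applying this at each scale $r_k$, the hypothesis $||\rho_0||_{p,r_k}\le\sqrt{\epsilon_1}\,r_k^\alpha$ closes the induction to give $\fint_{B_{r_k}}|T-\nabla Q_k|^2\lesssim r_k^{2(1+\alpha)}$, with $Q_{k+1}-Q_k$ of coefficient-size $O(r_k^\alpha)$, hence summable. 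The limit $Q := \lim_k Q_k$ therefore satisfies $\fint_{B_r}|\nabla u-\nabla Q|^2\lesssim r^{2(1+\alpha)}$ for all $r<1/2$, and a Poincar\'e application (after adjusting the constant term of $Q_k$ at each step to match the mean of $u$) promotes this to the first stated conclusion $\fint_{B_r}|u-Q|^2\lesssim r^{2(2+\alpha)}$.

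For the pointwise estimate \eqref{mainapp}, I would combine the $L^2$ Campanato bound with a universal interior $C^{1,1}$ bound on $u$, available in the small-perturbation regime since $0$ lies in the interior of $E$ and Caffarelli-type regularity gives $||D^2u||_{L^\infty(B_{3/4})}\lesssim 1$. Set $g:=u-Q$, so that $||D^2g||_\infty\lesssim 1$. For any $x_0\in B_{r/2}$ and any $\rho\le r/2$, Taylor's theorem (using that $\fint_{B_\rho(x_0)}(x-x_0)\,dx=0$) yields $|g(x_0)-\fint_{B_\rho(x_0)}g|\lesssim\rho^2$, while Cauchy--Schwarz gives $|\fint_{B_\rho(x_0)}g|\lesssim\rho^{-n/2}r^{(n+4+2\alpha)/2}$. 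Hence
\[
|g(x_0)|\;\lesssim\;\rho^2 \;+\; \rho^{-n/2}\,r^{(n+4+2\alpha)/2}.
\]
Choosing $\rho\sim r^{(n+4+2\alpha)/(n+4)}$ (which is $\le r/2$ for $r$ below a universal threshold) yields $|g(x_0)|\lesssim r^{2+4\alpha/(n+4)}$; for $r$ above the threshold the estimate is trivial because $|g|$ is universally bounded. This establishes \eqref{mainapp}.
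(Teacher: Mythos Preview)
Your iteration argument for the $L^2$ Campanato estimate is essentially the paper's approach (Proposition~\ref{onestep} and Theorem~\ref{epsilon}), and that part is fine.

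The gap is in your derivation of \eqref{mainapp}. You invoke a uniform bound $\|D^2u\|_{L^\infty(B_{3/4})}\lesssim 1$ from ``Caffarelli-type regularity,'' but no such bound is available under the hypotheses. The decay assumption \eqref{decay} is \emph{pointwise at the origin}: the balls $B_r$ are centered at $0$. At a generic point of $B_{3/4}$ you know only that $C_0^{-1}\le\rho_0\le C_0$ and that $\rho_0$ is close to $1$ in $L^p(B_1)$. Caffarelli's theory for $\det D^2u$ bounded between positive constants yields $u\in C^{1,\beta}_{\mathrm{loc}}$ for some $\beta<1$, not $C^{1,1}$; his interior $W^{2,p}$ estimate requires continuity of the right-hand side, which is not assumed. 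Indeed, the homogeneous example in Section~\ref{counterexample} (where the constant is not small) has $u_{yy}$ blowing up along the $y$-axis, illustrating that mere boundedness of $\rho_0$ does not give $C^{1,1}$. Without the $\|D^2g\|_\infty\lesssim 1$ input, your Taylor step $|g(x_0)-\fint_{B_\rho(x_0)}g|\lesssim\rho^2$ collapses and the interpolation fails.

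The paper replaces the unavailable $C^{1,1}$ bound by a property that \emph{is} always at hand: convexity of $u$. Lemma~\ref{pointwise} shows directly that for a convex function $\tilde u$ with $\fint_{B_1}|\tilde u - Q|^2\lesssim\delta^2$ (here $Q=|x|^2/2$) one has $|\tilde u - Q|\lesssim\delta^{4/(4+n)}$ on $B_{1/2}$; the proof is an elementary comparison of $\tilde u$ with supporting and tangent planes of $Q$. Rescaling $\tilde u(x)=r^{-2}u(rx)$ then gives exactly \eqref{mainapp}. Note that this convex-analysis lemma and your interpolation argument produce the \emph{same} exponent $\tfrac{4\alpha}{4+n}$, so your instinct for the optimal rate was correct---but the input you used to reach it is not justified in this setting, and the paper's Lemma~\ref{pointwise} is precisely the substitute needed.
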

Roughly speaking, Theorem \ref{Main} states that even when one of the densities is rough (e.g., has sharp spikes and valleys at small scales), as long as it behaves in a H\"{o}lder fashion in an appropriate integral sense, the potential enjoys a pointwise second-order Taylor expansion. Equation \eqref{mainapp} is sharp in the sense that one cannot get a better $L^\infty$ estimate starting from the $L^2$ estimate we have (see Section \ref{sharp}).\\

When $\rho_0$ is $C^\alpha$, the regularity of the corresponding Monge-Amp\`{e}re equations has already been established (see \cite{LC4, FJM}). In fact, one can generalize the proof of \cite{FJM} to prove Theorem \ref{Main} by using the ABP estimate (see \cite[Lemma~1.2]{LC3}) instead of the comparison principle. Our main result, Theorem \ref{Main}, can be viewed as an $\epsilon$-regularity theorem. As a general principle, $\epsilon$-regularity results are closely related to and useful in proving partial regularity theorems. For partial regularity in the context of the Monge-Amp\`{e}re equation and optimal transport, see \cite{DF} and \cite{FK}. Goldman and Otto (see \cite{GO}) gave a variational proof of partial regularity for optimal
transport maps when the source and target densities are $C^\alpha$.\\


Our work is inspired by \cite{GHO} and \cite{GO}. The first step to prove Theorem \ref{Main} is to establish the harmonic approximation theorems (Theorem \ref{ev} and Theorem \ref{lag}). In fact, these harmonic approximation theorems have been proved in \cite[Theorem~1.4 and Theorem~1.6]{GHO}. For a somewhat simpler and more motivated proof, see \cite{KO}.  Although the harmonic approximation results we prove are not new, and we are working with a stronger hypothesis $p>n$, the techniques we use may still be of value. The stronger assumption allows us to significantly simplify the proof of the harmonic approximation compared to \cite{GHO}. Next, we compare and contrast our results and techniques with those of Goldman and Otto in \cite{GO}, and Goldman, Huesmann, and Otto in \cite{GHO}. To do so, we delve into technical details; readers unfamiliar with \cite{GO} and \cite{GHO} may wish to skip this part of the introduction on a first reading.

To prove Theorem \ref{ev}, we need to prove that the interpolation density $\rho$ (defined in the beginning of Section \ref{lemmas}) is close to the uniform density. In \cite{GO}, they establish $L^\infty$ closeness of $\rho$ to $1$ using the concavity of the map $\det^{1/n}$. In \cite{GHO}, they prove the corresponding result in the Wasserstein distance sense (see Lemma 2.10 in \cite{GHO}). The proof uses elliptic PDE, careful analysis techniques, and an appropriate choice of radius to obtain derivative bounds for the harmonic approximation. In our paper, we show that it is enough to have an $L^1$ closeness of $\rho$ to uniform density. To do so, we use different PDE techniques, such as Calderón-Zygmund theory. This yields a more streamlined and direct proof, at the cost of the stronger assumption $p > n$. The work of Goldman,  Huesmann, and Otto (see  \cite{GHO}) shows that this assumption is not required.  In Lemma 3.3, we show that $\rho$ is close to $1$ in an $L^1$ sense, and this proof is new. To do so, we use the concavity of $\det^{1/n}$, a displacement bound (Lemma \ref{lem1}), and some analytical tools, such as Chebyshev’s inequality. In the proof of harmonic approximation (Theorem \ref{ev}), we use Calderón-Zygmund estimates to obtain a global $L^{\infty}$ bound for $\nabla\phi$, where $\phi$ is the harmonic approximation of the potential of the optimal transport map defined in \eqref{phi}. Additionally, since we are working in 
$L^p$ spaces, we must work with integral estimates rather than pointwise ones, which complicates some aspects of the analysis compared to \cite{GO}. 

Another key difference from \cite{GO} is that we assume \eqref{decay} only at a single point. This means we cannot use the Morrey-Campanato space theory as in \cite{GO} to directly get \eqref{mainapp}. Instead, we use some convex analysis (see Lemma \ref{pointwise}) to conclude Theorem \ref{Main}. To prove this lemma, we rely on the convexity of the potential of the optimal transport map, and we obtain a sharp $L^2$ to $L^\infty$ estimate.  This $L^2$ to $L^\infty$ type estimate is a general result about semi-convex functions and it can be used in many other contexts.   

The induction step in the proof of $\epsilon$-regularity is also quite challenging in our case. When we keep applying the one-step improvement in the proof of $\epsilon$-regularity, we must take a different path to control the source and target densities because we are working with $L^p$ norms (rather than $C^{\alpha}$ norms, as in \cite{GO}). Moreover, to exploit that $B_{1/2} \subset E$ we must carefully keep track of shifts of the target density that happen along the iteration. 

We now discuss our assumptions on $\rho_0$ and $\rho_1$. We assume that $\rho_0$ is an $L^p$ density that is bounded above, allowing for bounded spikes in the source density. This assumption is crucial for our proof of Theorem \ref{ev}. In particular, it is used to justify Lemma \ref{uniform} and inequality \eqref{un1}. It is unclear whether Theorem \ref{ev} can be proved using our technique without this assumption, although the theorem remains valid according to \cite[Theorem~1.4]{GHO} without this assumption. Their work also shows that the harmonic approximation theorem remains valid without assuming a lower bound on $\rho_0$. We have used this assumption in some key steps in the proof, for example in the proof of Lemma \ref{lem1} (which can be found in \cite{GO}) and in the proof of Theorem \ref{ev} where a lower bound on $\Tilde{\rho}$ is required (see equation \eqref{competetor}). The target density $\rho_1$ is assumed to be essentially constant. This assumption is important to run the iteration in the proof of Theorem \ref{epsilon} (see discussion in the previous paragraph). \\

This paper is organized as follows. In Section \eqref{counterexample}, we present an example in the plane that shows why it is important for the small constant $\epsilon_1$ to appear in equation \eqref{decay} of Theorem \ref{Main}. We then turn to the proof of the main theorem. We begin by proving some key lemmas that will be used later. In particular, we show that the interpolation density $\rho$ (see the beginning of Section \ref{lemmas} for its definition) is close to the uniform density in the $L^1$ sense. We also prove an $L^2$ to $L^\infty$ type estimate for convex functions. In Section \ref{harmonic} we find a harmonic function $\phi$ whose gradient well approximates our optimal transport map at a certain scale. Then, in Section \ref{ota}, we show that if $T$ is close to the identity at a certain scale, then after rescaling , it is even closer to the identity. We iterate this procedure infinitely many times to get the $L^2$ estimate in our main theorem.  Using this $L^2$ estimate and the semi-convex analysis mentioned earlier, we complete the proof of Theorem \ref{Main}.  Finally, in Section \ref{sharp}, we show that one cannot obtain better than pointwise $C^{2,\,\sim\frac{\alpha}{n}}$ regularity starting from the $L^2$ estimate we have.

\section{Counterexample when $\rho_0$ has $C^\alpha$ decay in dimension $n=2$}\label{counterexample}

Consider the non-radial function $u$ constructed in Section 4.2 of \cite{FJM}. This function $u$ satisfies the following homogeneity relation:
\begin{equation}\label{homogeneity}
    u(x,y)=\lambda^{-1-\gamma}u(\lambda x,\lambda^\gamma y)
\end{equation}
for $\gamma>1$, and is symmetric under reflection over both axes. It has been shown in \cite[Section~4.2]{FJM} that $\det(D^2u)=f$ is smooth away from the origin, strictly positive, bounded and Lipschitz on $\max\{|x|,|y|\}=1$. Moreover, $\det(D^2u)$ remains constant along the curves $y = m|x|^\gamma$ for all $m \in \mathbb{R}$. In addition, $u$ is not $C^2$ at the origin. Thus, \eqref{mainapp} does not hold for any quadratic polynomial $Q$. Fix $0<\alpha<1$ and $p\geq 1$, and let $\gamma=\frac{2}{2-\alpha}+p\alpha+1$. We show that
$$\left( 
\frac{1}{r^2}\int_{R_r} |f-a|^p\right)^{\frac{1}{p}} \lesssim r^\alpha$$ for all $r>0$ and some fixed $a > 0$. Multiplying $u$ by $a^{-1/2}$ and setting $\rho_0 = f/a$ and $\rho_1 = 1$, we see that a small value of $\epsilon_1$ is necessary in the main theorem.

For computational simplicity, we work in a square $R_r$ of side length $r$ centered at the origin, rather than a ball of radius $r$. As per the calculations in \cite{FJM}, for $y \neq 0$ and $t=|y|^{-\frac{1}{\gamma}}|x|$,
$$\det(D^2u)=\gamma^{-2}g''(t)\left((\gamma+1) g(t)+(\gamma-1)tg'(t )\right)+g'(t)^2,$$
where $g(t)=u(t,\pm 1).$

\begin{figure}
\hspace*{-2cm}                                                        
\includegraphics[scale=0.3]{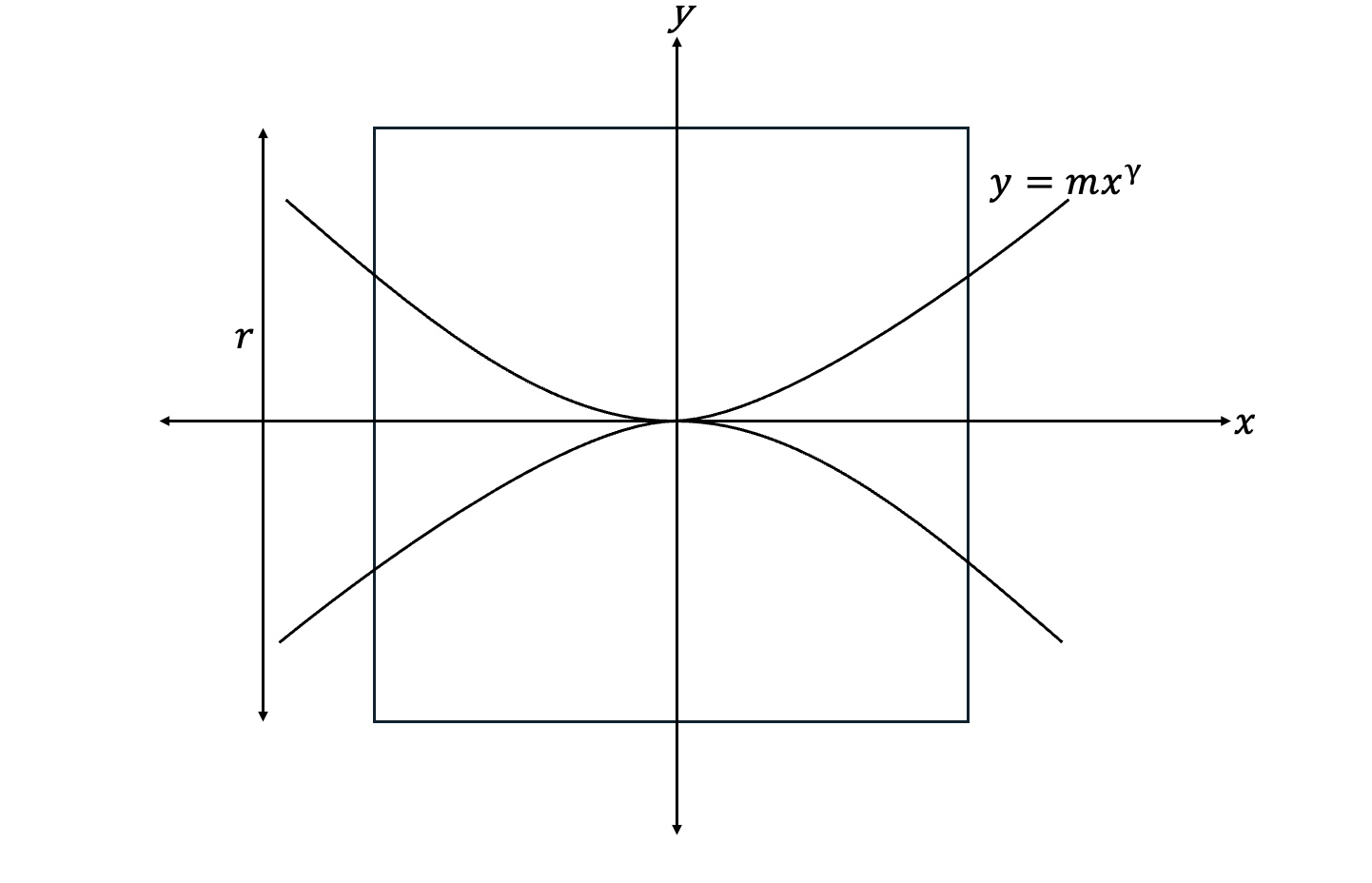}
\caption{Level sets for $\det D^2u(x,y)$}
\label{Fig1}
\end{figure}

By the homogeneity and symmetries of $u$, we have 
$$\det D^2u(x,y) = \det D^2u(|x||y|^{-\frac{1}{\gamma}},1)$$
for all $y \neq 0.$ We recall from \cite{FJM} that $g$ is equal to $c+dt^2$ for small $|t|$, and some constants $c, d > 0$.

Let $m > 0$ be a large constant to be chosen shortly. We let $A_r$ be the region in $R_r$ that either lies above $y=m|x|^\gamma$ or below $y=-m|x|^\gamma$. In $A_r$, we have $|t|<m^{-\frac{1}{\gamma}}$. Choose $m$ sufficiently large so that $g(t) = c + dt^2$ holds throughout the region $A_r$.

Now, for $(x,y) = (x,1)$, we have $t = x$, and thus,
\begin{equation*}
\begin{aligned}
    \det(D^2u(x,1))&=\gamma^{-2}2d\left((\gamma+1) (c+dx^2)+2d(\gamma-1)x^2\right)+4d^2x^2\\
    &=\frac{2cd(\gamma+1)}{\gamma^2}+\frac{2d^2}{\gamma^2}(2\gamma^2+3\gamma-1)x^2\\
    &=a+b x^2
\end{aligned}
\end{equation*}
  where $a:=\frac{2cd(\gamma+1)}{\gamma^2}$ and $b:=\frac{2d^2}{\gamma^2}(2\gamma^2+3\gamma-1)$. Hence, in the region $A_r$,
$$\det(D^2u(x,y))=\det(D^2u(|x||y|^{-\frac{1}{\gamma}},1))=a+b x^2|y|^{-\frac{2}{\gamma}}.$$ 
In the complement of the region $A_r$ within the rectangle $R_r$, $\det(D^2u)$ remains bounded. For small positive $r$, we have
\begin{equation*}
\begin{aligned}
\frac{1}{r^2}\int_{R_r} |f-a|^p&=\frac{1}{r^2}\int_{R_r\setminus A_r}|f-a|^p+\frac{1}{r^2}\int_{A_r}|f-a|^p\\
&\leq C \frac{1}{r^2}|A_r^c\cap R_r|+\frac{4}{r^2}\int_0^r\int_{mx^\gamma}^r b^p x^{2p} y^{-\frac{2p}{\gamma}}\\
&\leq C \left(r^{\gamma-1} + r^{2p\left(1-\frac{1}{\gamma}\right)}
\right).
\end{aligned}
\end{equation*}

This concludes the estimate, yielding
$$\left( 
\frac{1}{r^2}\int_{R_r} |f-a|^p\right)^{\frac{1}{p}}\leq C(r^{2(1-\frac{1}{\gamma})}+r^{\frac{\gamma-1}{p}})\leq Cr^\alpha$$ for all $r > 0$.

\section{Key Lemmas}\label{lemmas}
In this section, we prove several lemmas that will be instrumental in establishing our main theorem. For $0\leq t\leq 1$ and $x\in \mathbb{R}^n$, define $$T_t(x):=tT(x)+(1-t)x.$$ Let $\rho(t):=T_{t_{\#}}(\rho_0)$ and $j(t):=T_t{_{\#}}\left[\left(T-\mathrm{Id}\right)\rho_0\right]$. Note that $(\rho,j)$ satisfies the continuity equation, that is, $$\partial_t\rho+\nabla\cdot j=0.$$ Since $T$ is the optimal transport map, the pair $(\rho,j)$ minimizes $\int\frac{1}{\rho}|j|^2$ among all choices $(\rho,j)$ such that $\rho(0)=\rho_0$, $\rho(1)=\rho_1$ and the pair satisfies the continuity equation (for a proof, see Theorem 8.1 in \cite{CV}). For a more complete discussion on the definition of $\rho$ and $j$, look at section 3.1 in \cite{GO}.\\
We now introduce the notation $\gamma:= ||\rho_0||_{p,1}+||\rho_1||_{p,1}$, which will appear frequently throughout the paper. 

\begin{rem}
 Since $\rho_1$ is not taken to be the uniform density on all of  $B_1$, we note that $||\rho_1||_{p,1}\neq 0$.
\end{rem}

 We first prove an $L^\infty$ bound on the optimal transport map $T$. The proof of this remains the same as Lemma 3.1 in \cite{GO}.

\begin{lem}\label{lem1}
Let $T$ be the optimal transport map from $\rho_0$ to $\rho_1$. Assume $\rho_0\geq \frac{1}{C_0}$. Assume $\mathcal{E}\ll 1$. Then 
$\sup_{B_\frac{3}{4}}|T-x|+\sup_{B_\frac{3}{4}}|T^{-1}-x|\lesssim \mathcal{E}^{\frac{1}{n+2}}$. Moreover for all $t\in [0,1]$, we have $T_{t}(B_\frac{1}{8})\subset B_\frac{3}{16}$ and $T_t^{-1}(B_\frac{1}{2})\subset B_\frac{3}{4}$.
\end{lem}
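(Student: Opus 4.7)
The plan is to show that any pointwise displacement $L:=|T(x_0)-x_0|$ at some $x_0\in B_{3/4}$ forces the lower bound $L^{n+2}\lesssim \mathcal E$. After rotating so that $T(x_0)-x_0=Le_n$, I aim to construct a set $K\subset B_1$ of measure $|K|\gtrsim L^n$ on which $|T(y)-y|\gtrsim L$, using only the monotonicity of $T=\nabla u$ (from convexity of Brenier's potential). Then the hypothesis $\rho_0\geq 1/C_0$ gives
\[
\mathcal E=\fint_{B_1}|T-x|^2\rho_0\,dx\gtrsim |K|\,L^2\gtrsim L^{n+2},
\]
whence $L\lesssim \mathcal E^{1/(n+2)}$; and self-consistency (small $\mathcal E$ forces $L<1/2$) keeps $K\subset B(x_0,L/2)\subset B_1$, closing the loop.

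For the cone, I take
\[
K=\bigl\{x_0+te_n+w:\ 0<t<L/4,\ w\perp e_n,\ |w|<t/4\bigr\},
\]
of measure $\asymp L^n$. Two-point monotonicity $(T(y)-T(x_0))\cdot(te_n+w)\geq 0$ at $y=x_0+te_n+w\in K$ yields
\[
(T(y)-T(x_0))\cdot e_n\geq -\tfrac{|w|}{t}|T(y)-T(x_0)|\geq -\tfrac14|T(y)-T(x_0)|.
\]
I split into two cases. If $|T(y)-T(x_0)|\leq 2L$, this forces $T(y)_n\geq x_0^{(n)}+L/2$ while $y_n\leq x_0^{(n)}+5L/16$, giving $(T(y)-y)\cdot e_n\geq 3L/16$. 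If instead $|T(y)-T(x_0)|>2L$, then $|y-T(x_0)|=|(t-L)e_n+w|\leq 17L/16$, and the triangle inequality gives $|T(y)-y|\geq 15L/16$. Either way $|T(y)-y|\gtrsim L$ on all of $K$, completing the construction.

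The $T^{-1}$ estimate follows from running the identical argument on $T^{-1}$, viewed as the optimal map from $\rho_1$ to $\rho_0$; here the role of the lower bound on the source is played by $\rho_1=1$ on $E\supset B_{1/2}$, and the cone (of diameter $\ll 1$) lies in $E$ for $\mathcal E$ small. The moreover part is then immediate: with $\mathcal E\ll 1$, both $|T-x|$ and $|T^{-1}-x|$ are arbitrarily small on $B_{3/4}$, and since $T_t-\mathrm{id}=t(T-\mathrm{id})$ we get $T_t(B_{1/8})\subset B_{3/16}$; for $T_t^{-1}(B_{1/2})\subset B_{3/4}$ I rewrite $T_t(x)=y$ as $x=y-t(T(x)-x)$ and apply the displacement bound on $B_{3/4}$ by a continuity argument in $t$ starting from $T_0=\mathrm{id}$. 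The main obstacle is calibrating the opening angle $1/4$ and height $L/4$ of the cone so that both branches of the dichotomy yield $|T-y|\gtrsim L$ simultaneously while keeping $K\subset B_1$; without this balance one picks up a worse power of $L$ and hence a weaker exponent than $1/(n+2)$.
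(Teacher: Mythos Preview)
Your argument reproduces the monotonicity/cone argument of Goldman--Otto, which is precisely what the paper invokes in lieu of a proof. One minor caveat: for the $T^{-1}$ bound on $B_{3/4}$ your cone need not lie in $E$ (only $B_{1/2}\subset E$ is assumed here, unlike in \cite{GO} where both densities are bounded below on the full ball), so the symmetric argument literally yields the displacement bound for $T^{-1}$ only on a smaller ball; this does not affect the ``moreover'' clauses, which---as you note---follow from the $T$ bound alone via the continuity argument in $t$.
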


In the next lemma, we prove that $\rho$ cannot be arbitrarily far away from $1$. The idea is to use concavity of $\det^\frac{1}{n}$.

\begin{lem}\label{uniform}
Assume $\mathcal{E}+\gamma^2\ll 1$. Then $\rho(t)\leq  C$ for some universal constant $C>0$.    
\end{lem}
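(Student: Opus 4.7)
The plan is to exploit the concavity of the Minkowski determinant functional $M \mapsto \det(M)^{1/n}$ on symmetric positive semi-definite matrices, applied along the linear interpolation $DT_t = t\,DT + (1-t)I$. Since $u$ is convex, $DT = D^2 u \ge 0$ a.e.\ (by Aleksandrov's theorem), so this matrix is PSD and Minkowski's inequality gives
$$\det(DT_t)^{1/n} \ge t\,\det(DT)^{1/n} + (1-t) \quad \text{a.e.\ on } B_1.$$

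Next, I would identify $\det(DT)$ using the Monge--Amp\`ere equation. Because $\rho_1 = \chi_E$ equals $1$ on $E$ and $T$ pushes $\rho_0$ forward to $\rho_1$, one has $T(x) \in E$ for $\rho_0$-a.e.\ $x$, so Brenier's change-of-variables formula gives $\det(DT(x)) = \rho_0(x)/\rho_1(T(x)) = \rho_0(x)$ a.e.\ on $B_1$. The map $T_t$ is itself the gradient of the strictly convex function $tu(x) + \tfrac{1-t}{2}|x|^2$ for $t \in [0,1)$ (and equals $T$ at $t = 1$), so it is a.e.\ injective and admits a valid change of variables. Combining these,
$$\rho(t)\bigl(T_t(x)\bigr) \;=\; \frac{\rho_0(x)}{\det(DT_t(x))} \;\le\; \frac{\rho_0(x)}{\bigl[t\,\rho_0(x)^{1/n} + (1-t)\bigr]^{n}}$$
almost everywhere.

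The final step is an elementary one-variable analysis: for each fixed $s > 0$, the function $t \mapsto s/[ts^{1/n} + (1-t)]^n$ has sign of derivative opposite to the sign of $s^{1/n} - 1$, so it is monotone on $[0,1]$ and attains its maximum at an endpoint, yielding the bound $\max(s,1)$. Applying this with $s = \rho_0(x) \in [C_0^{-1}, C_0]$ gives $\rho(t) \le \max(C_0,1)$ on the support of $\rho(t)$, which is universal.

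The main obstacle is largely bookkeeping: verifying that the change of variables, Aleksandrov differentiability of $u$, and Brenier's identification of $\det DT$ all apply on a set of full $\rho_0$-measure, and checking that $T_t(B_1)$ indeed covers the support of $\rho(t)$ so that the pointwise bound derived along the trajectories $x \mapsto T_t(x)$ translates into an essentially supremum bound. The smallness hypothesis $\mathcal{E} + \gamma^2 \ll 1$ does not enter the estimate quantitatively but justifies invoking the setup (in particular, Lemma \ref{lem1}) that controls the range of $T_t$ and ensures the relevant maps are well-behaved.
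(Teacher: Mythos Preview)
Your proof is correct and follows essentially the same approach as the paper: both arguments rest on the push-forward formula $\rho(t,T_t(x)) = \rho_0(x)/\det(DT_t(x))$, the concavity of $\det^{1/n}$ applied to $DT_t = tDT + (1-t)I$, and the Monge--Amp\`ere relation $\det DT = \rho_0/\rho_1\circ T$. The only cosmetic difference is in the last elementary step: the paper applies the AM--GM inequality to pass from $\det(DT_t)^{1/n} \ge t\,\det(DT)^{1/n} + (1-t)$ to the cleaner bound $\det(DT_t) \ge (\det DT)^t$, yielding $\rho(t) \le \rho_0^{1-t}\rho_1^t$ directly, whereas you analyze the monotonicity of $s/[ts^{1/n}+(1-t)]^n$ by hand to reach $\max(\rho_0,1)$.
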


 
\begin{proof}

For every $0<t<1$, $T_t$ has a well defined inverse a.e.\ (see Theorem 8.1 in \cite{CV}). This means for $x\in B_1$, we have $$\rho(t,x)=\frac{\rho_0(T_t^{-1}(x))}{\det \nabla T_t(T_t^{-1}(x))}.$$ 
 Concavity of $\det^{\frac{1}{n}}$ on symmetric matrices gives us
$\det \nabla T_t(T_t^{-1}(x))\geq (\det \nabla T(T_t^{-1})(x))^t$.  We also know $$\det \nabla T(T_t^{-1}(x))=\frac{\rho_0(T_t^{-1}(x))}{\rho_1(T(T_t^{-1}(x)))}.$$
Conclude that
\begin{equation*}
\rho(t,x)\leq \frac{\rho_0(T_t^{-1}(x))}{(\det \nabla T(T_t^{-1}(x)))^t}\leq (\rho_0(T_t^{-1}(x)))^{1-t}(\rho_1(T(T_t^{-1}(x))))^t.
\end{equation*}
Since both $\rho_0$ and $\rho_1$ are bounded, the result follows.
\end{proof}

Next, we show that the interpolation density $\rho(t,x)$ is close to $1$ in the ball of radius $\frac{1}{2}$ in $L^1$ sense.
\begin{lem}\label{densityclose}
For $\mathcal{E}+\gamma^2 \ll 1$, we have, $$\int_0^1\int_{B_{\frac{1}{2}}}|\rho-1|dx\:dt\lesssim \mathcal{E}^\frac{1}{n+2}+\sqrt{\gamma}.$$
\end{lem}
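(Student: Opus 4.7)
The plan is to write $|\rho - 1| = 2(\rho - 1)_+ - (\rho - 1)$, so it suffices to bound both $\int_{B_{1/2}}(\rho(t,\cdot) - 1)_+\,dx$ (uniformly in $t$) and $\bigl|\int_{B_{1/2}}(\rho(t,\cdot) - 1)\,dx\bigr|$, then integrate over $t \in [0,1]$. The positive part will be handled via concavity of $\det^{1/n}$ together with Chebyshev's inequality, and the signed integral via a pushforward argument combined with the displacement bound of Lemma~\ref{lem1}.

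For the positive part, the main obstacle is that we have no upper bound on $\det\nabla T_t$, so we cannot freely change variables from $x$-space to $y$-space for an arbitrary integrand. The key observation is that on $\Omega_t := T_t^{-1}(B_{1/2}) \subset B_{3/4}$ the substitution $x = T_t(y)$ combined with the a.e.\ identity $\rho(t, T_t(y))\det\nabla T_t(y) = \rho_0(y)$ nonetheless gives
\[
\int_{B_{1/2}}(\rho(t,x) - 1)_+\,dx = \int_{\Omega_t}\bigl(\rho_0(y) - \det\nabla T_t(y)\bigr)_+\,dy,
\]
in which $\det\nabla T_t$ only appears subtracted inside the positive part. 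Applying concavity of $\det^{1/n}$ to $\nabla T_t = (1-t)I + t\nabla T$, followed by Bernoulli's inequality, yields the lower bound $\det\nabla T_t(y) \geq 1 + nt(a(y) - 1)$ with $a(y) := (\rho_0(y)/\rho_1(T(y)))^{1/n}$. Fixing a threshold $\lambda > 0$, I would split $\Omega_t$ into a "good" set on which $|\rho_0 - 1| \leq \lambda$ and $|\rho_1 \circ T - 1| \leq \lambda$, and its complement. On the good set, both $\rho_0$ and $a$ lie within $O(\lambda)$ of $1$, so the integrand is $\lesssim \lambda$, producing a contribution $\lesssim \lambda$. On the complement, I bound the integrand by $\rho_0 \leq C_0$ and estimate its Lebesgue measure by Chebyshev: $|\{|\rho_0 - 1| > \lambda\} \cap B_{3/4}| \lesssim \lambda^{-p}\gamma^p$ directly, while $|\{|\rho_1 \circ T - 1| > \lambda\} \cap B_{3/4}| \lesssim \lambda^{-p}\gamma^p$ follows by transferring Lebesgue measure through $T^{-1}$ using $\rho_0 \geq 1/C_0$ and $\rho_1 \leq 1$. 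Optimizing with $\lambda \sim \gamma^{p/(p+1)}$ produces $\int_{B_{1/2}}(\rho(t) - 1)_+\,dx \lesssim \gamma^{p/(p+1)} \leq \sqrt{\gamma}$ uniformly in $t$ (using $p > 1$ and $\gamma \ll 1$).

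For the signed integral, the pushforward $\int_{B_{1/2}}\rho(t)\,dx = \int_{\Omega_t}\rho_0\,dy$ gives
\[
\int_{B_{1/2}}(\rho(t) - 1)\,dx = \int_{\Omega_t}(\rho_0 - 1)\,dy + \bigl(|\Omega_t| - |B_{1/2}|\bigr).
\]
The first summand is bounded by $\int_{B_{3/4}}|\rho_0 - 1|\,dy \lesssim \gamma$ via H\"older's inequality. For the second, Lemma~\ref{lem1} gives $|T_t - \mathrm{Id}| \leq t|T - \mathrm{Id}| \lesssim \mathcal{E}^{1/(n+2)}$ on $B_{3/4}$, which forces $B_{1/2 - C\mathcal{E}^{1/(n+2)}} \subset \Omega_t \subset B_{1/2 + C\mathcal{E}^{1/(n+2)}}$ and hence $\bigl||\Omega_t| - |B_{1/2}|\bigr| \lesssim \mathcal{E}^{1/(n+2)}$. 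Assembling these two bounds through the identity $|\rho - 1| = 2(\rho - 1)_+ - (\rho - 1)$ and integrating over $t$ yields the claimed estimate.
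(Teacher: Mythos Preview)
Your proof is correct. The ingredients you use---concavity of $\det^{1/n}$, Chebyshev's inequality, and the displacement bound of Lemma~\ref{lem1}---are exactly the ones the paper uses, but your organization differs. The paper first changes variables to reduce to $\int_{\Omega_t}|\rho_0 - \det\nabla T_t|$, then applies the triangle inequality and splits the $|\det\nabla T_t - 1|$ term according to whether $\det\nabla T_t \leq (1-\sqrt{\gamma})^t$; the delicate part is bounding $\int_{A_t^c}(\det\nabla T_t - 1)$ by $|B_{1/2}| - |A_t^c|$ via the change of variables in reverse. Your decomposition $|\rho-1| = 2(\rho-1)_+ - (\rho-1)$ is cleaner in one respect: the signed term $\int_{B_{1/2}}(\rho-1)$ becomes an exact mass-balance identity via pushforward, so no estimate on $\det\nabla T_t$ is needed there at all, and the displacement bound enters only through $|\Omega_t| - |B_{1/2}|$. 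On the other hand, your treatment of the positive part carries an extra contribution from $\rho_1\circ T$ that the paper avoids (it uses $\log\det$ concavity and the fact that $\rho_1\circ T = 1$ a.e.\ on the support of $\rho_0$ to get $\det\nabla T_t \geq \rho_0^t$ directly); your Chebyshev argument for $\{|\rho_1\circ T - 1| > \lambda\}$ is valid but superfluous in the present setting where $\rho_1 = \chi_E$. Both routes yield the same final bound with the same exponents.
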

\begin{proof}
First using concavity of $\log \det,$ conclude that $\det(\nabla T_t)\geq \rho_0^t.$ Now, 
\begin{equation*}
\begin{aligned}
\int_0^1\int_{B_{\frac{1}{2}}}|\rho-1|dx\;dt
&=\int_0^1\int_{B_{\frac{1}{2}}}\left|\frac{\rho_0(T_t^{-1}(x))}{\det(\nabla T_t(T_t^{-1}(x)))}-1\right|dx\;dt\\
&=\int_0^1\int_{T_t^{-1}(B_{\frac{1}{2}})}\left|\frac{\rho_0(z)}{\det(\nabla T_t(z))}-1\right|\det(\nabla T_t(z)) dz\;dt\\
&=\int_0^1\int_{T_t^{-1}(B_{\frac{1}{2}})}|\rho_0(z)-\det(\nabla T_t(z))|dz\;dt.
\end{aligned}
\end{equation*}
Thus, it remains to show that
$$\int_0^1\int_{T_t^{-1}(B_{\frac{1}{2}})}|\rho_0(x)-\det(\nabla T_t(x))|dx\;dt\lesssim \mathcal{E}^\frac{1}{n+2}+\sqrt{\gamma}.$$ By triangle inequality,
$$\int_0^1\int_{T_t^{-1}(B_{\frac{1}{2}})}|\rho_0(x)-\det(\nabla T_t(x))|dx\;dt$$
$$\leq \int_0^1\int_{T_t^{-1}(B_{\frac{1}{2}})}|\rho_0(x)-1|dx\;dt+\int_0^1\int_{T_t^{-1}(B_{\frac{1}{2}})}|\det(\nabla T_t(x))-1|dx\;dt. $$
As by the previous lemma, $T_t^{-1}(B_\frac{1}{2})\subset B_1$, the first term of the right-hand side is bounded by $C\gamma$ for some constant $C$ depending only on dimension. To bound the second term, we use that $\det(\nabla T_t)\geq \rho_0^t$ for all $t$:

$$\int_0^1\int_{T_t^{-1}(B_{\frac{1}{2}})}|\det(\nabla T_t(x))-1|dx\;dt$$

$$\leq \int_0^1\int_{A_t}\left(1-\det(\nabla T_t(x))\right)dx\;dt+\int_0^1\int_{A_t^c}\left|1-\det(\nabla T_t(x))\right|dx\;dt,$$
where $A_t:=T_t^{-1}(B_\frac{1}{2})\cap \{\rho_0^t\leq\det(\nabla T_t)\leq (1-\sqrt{\gamma})^t\}$ and $A_t^c$ be the complement of $A_t$ inside $T_t^{-1}(B_\frac{1}{2}).$ We show that measure of $A_t$ cannot be too big:
\begin{equation*}
\begin{aligned}
|A_t|&\leq T_t^{-1}(B_\frac{1}{2})\cap \{\rho_0\leq 1-\sqrt{\gamma}\}\\
&\leq T_t^{-1}(B_\frac{1}{2})\cap \{|\rho_0-1|\geq \sqrt{\gamma}\}\\
&\leq \gamma^{\frac{-p}{2}}\int_{T_t^{-1}(B_\frac{1}{2})}|\rho_0-1|^p dx\\
&\lesssim \gamma^{\frac{p}{2}}, 
\end{aligned}
\end{equation*}
where we have used Chebyshev's inequality for the third step above.
Hence, 
$$\int_0^1\int_{A_t}\left(1-\det(\nabla T_t(x))\right)dx\;dt \leq \int_0^1 |A_t|\leq \gamma^{\frac{p}{2}}\leq\sqrt{\gamma}.$$
To finish the proof, all we need to show is 
$$\int_0^1\int_{A_t^c}\left|1-\det(\nabla T_t(x)\right)|dx\;dt \lesssim \mathcal{E}^\frac{1}{n+2}+\sqrt{\gamma}.$$
Now,
\begin{equation*}
\begin{aligned}
&\int_0^1\int_{A_t^c}\left|1-\det(\nabla T_t(x)\right)|dx\;dt \\
& \leq\int_0^1\int_{A_t^c}(\det(\nabla T_t(x))-1+2(1-(1-\sqrt{\gamma})^t)dx\;dt.
\end{aligned}
\end{equation*}
As $(1-(1-\sqrt{\gamma})^t)\leq \sqrt{\gamma}$ for all $t$, it is enough to show: 
$$\int_0^1\int_{A_t^c}(\det(\nabla T_t(x))-1)dx\;dt\lesssim \sqrt{\gamma}+\mathcal{E}^\frac{1}{n+2}.$$
\noindent
We have,
\begin{equation*}
\begin{aligned}
&\int_0^1\int_{A_t^c}(\det(\nabla T_t(x))-1)dx\;dt\\
&= \int_0^1\int_{A_t^c} \det(\nabla T_t(x)) dx\;dt- \int_0^1\int_{A_t^c} 1dx\;dt\\
&\leq \int_0^1\int_{T_t^{-1}(B_\frac{1}{2})}\det(\nabla T_t(x))dx\;dt-|A_t^c|\\
&=\int\int_{B_{1/2}}dz\;dt-|A_t^c|\\
&=|B_\frac{1}{2}|-|A_t^c|.
\end{aligned}
\end{equation*}
We already know that $|A_t|\leq C\gamma^\frac{p}{2}$. Hence,
$$|A_t^c|=|T_t^{-1}(B_\frac{1}{2})|-|A_t|\geq |T_t^{-1}(B_\frac{1}{2})|-C\gamma^{\frac{p}{2}}\geq |B_{\frac{1}{2}-C\mathcal{E}^\frac{1}{n+2}}|-C\gamma^\frac{p}{2}$$
where we use Lemma \ref{lem1} in the last inequality.
 Thus, 
$$\int_0^1\int_{A_t^c}(\det(\nabla T_t(x))-1)dx\;dt\leq |B_\frac{1}{2}|-|B_{\frac{1}{2}-C\mathcal{E}^\frac{1}{n+2}}|+C\gamma^\frac{p}{2}$$
$$\lesssim \mathcal{E}^{\frac{1}{n+2}}+\gamma^{\frac{p}{2}}.$$
This completes the proof.
\end{proof}
Next, using convex analysis we want to show an $L^2$ to $L^\infty$ type estimate:
\begin{lem}\label{pointwise}
Let $Q(x)=|x|^2/2$.
If $\fint_{B_1}|\Tilde{u}(x)-Q(x)|^2\lesssim \delta^2$ and $\Tilde{u}$ is convex, then $|\Tilde{u}-Q|\lesssim \delta^{\mu}$ in $B_\frac{1}{2}$, where $\mu=\frac{4}{4+n}$.
\end{lem}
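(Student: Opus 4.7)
Set $v := \tilde u - Q$. The plan is to establish the two-sided bound $|v(x_0)| \lesssim \delta^\mu$ for each $x_0 \in B_{1/2}$ by handling the upper and lower bounds separately, since convexity of $\tilde u$ yields only one-sided Jensen-type estimates; the lower bound is the harder half.

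For the upper bound, I would apply Jensen's inequality to the convex function $\tilde u$: for any $r \le 1 - |x_0|$,
\[
\tilde u(x_0) \le \fint_{B_r(x_0)} \tilde u.
\]
A direct computation gives $\fint_{B_r(x_0)} Q - Q(x_0) = c_n r^2$, so subtracting and using Cauchy--Schwarz against the hypothesis $\fint_{B_1} v^2 \lesssim \delta^2$ yields
\[
v(x_0) \le \fint_{B_r(x_0)} v + c_n r^2 \lesssim \frac{\delta}{r^{n/2}} + r^2.
\]
Optimizing at $r \sim \delta^{2/(n+4)}$ gives $v(x_0) \lesssim \delta^\mu$ with $\mu = 4/(4+n)$. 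The same argument applies on any strictly interior ball such as $B_{3/4}$, provided $\delta$ is small enough.

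For the lower bound, suppose $v(x_0) = -M$ with $x_0 \in B_{1/2}$ and $M \gg \delta^\mu$ (otherwise the claim already holds). The strategy is to show $v \le -M/4$ on a ball of radius $\sim \sqrt{M}$ around $x_0$, which then combines with the $L^2$ hypothesis to force $M \lesssim \delta^\mu$. For a unit vector $e$ and $0 < t < r$ with $x_0 + re \in B_{3/4}$, the chord inequality from convexity of $\tilde u$ gives
\[
\tilde u(x_0+te) \le (1-t/r)\,\tilde u(x_0) + (t/r)\,\tilde u(x_0+re).
\]
Plugging in $\tilde u(x_0) = Q(x_0)-M$, the already-proven upper bound $\tilde u(x_0+re) \le Q(x_0+re) + C\delta^\mu$ at the endpoint, and the elementary identity $(1-t/r)\,Q(x_0) + (t/r)\,Q(x_0+re) - Q(x_0+te) = t(r-t)/2$, one obtains
\[
v(x_0+te) \le -M(1 - t/r) + (t/r)\,C\delta^\mu + t(r-t)/2.
\]
Taking $t = r/2$ with $r = 2t \le \min(1/4, \sqrt{M})$ makes the right-hand side $\le -M/4$ under $M \gg \delta^\mu$. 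Letting $e$ range over unit vectors and $t$ range over $[0, \sqrt{M}/2]$ (with $r$ rescaled to $2t$ in each instance), I conclude $v \le -M/4$ on a ball $B_\rho(x_0)$ with $\rho \sim \min(1/8, \sqrt{M}/2)$. The $L^2$ hypothesis then gives
\[
\delta^2 \gtrsim \int_{B_1} v^2 \gtrsim M^2 \cdot \rho^n \gtrsim M^{2+n/2},
\]
which rearranges to $M \lesssim \delta^\mu$ (the case $\rho = 1/8$, i.e., $M \gtrsim 1$, yields the even stronger $M \lesssim \delta$).

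The main obstacle is the lower bound: convexity alone gives only $v(x_0) \le \fint v + O(r^2)$, a one-sided estimate, so one cannot symmetrically lower-bound $v(x_0)$ via Jensen. The resolution is to combine the chord inequality (which turns two endpoint values into an interior upper bound on $\tilde u$) with the already-proven pointwise upper bound $v \le C\delta^\mu$, thereby propagating the negative value $v(x_0) = -M$ across a whole ball of radius $\sim \sqrt{M}$. The scale $\sqrt{M}$ reflects the sharp balance between the linear-in-chord term $-M(1 - t/r)$ and the convex-deficit term $t(r-t)/2$.
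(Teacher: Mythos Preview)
Your argument is correct and reaches the sharp exponent $\mu = 4/(4+n)$, but it proceeds along a genuinely different route from the paper's proof. For the upper bound, you use the sub-mean-value property of convex functions (Jensen) plus Cauchy--Schwarz and optimize in $r$; the paper instead argues by contradiction, subtracting a supporting plane of $\tilde u$ at $x_0$ and integrating $(\tilde u(x_0)-\tilde Q)^2$ over the sublevel set $\{\tilde Q < \tilde u(x_0)\}$, whose radius is $\gtrsim \delta^{\mu/2}$. Your approach here is cleaner and avoids having to locate the sublevel set inside $B_1$. For the lower bound, you bootstrap: you feed the already-proven upper bound on $B_{3/4}$ into the chord inequality to propagate the negative value $-M$ to a ball of radius $\sim \sqrt{M}$, then compare with the $L^2$ hypothesis. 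The paper instead reduces Case~2 to Case~1 geometrically: it locates a nearby crossing point $x_1$ with $|x_1 - x_0| \le \delta^{\mu/2}$, shows the supporting plane $L$ of $\tilde u$ at $x_1$ has slope $\ge C\delta^{\mu/2}$, and hence $\max(L-\tilde Q) \gtrsim \delta^\mu$, which puts one back in the first case. Both lower-bound arguments ultimately exploit that a large deficit of $\tilde u$ below $Q$ forces $\tilde u$ to exceed $Q$ by a comparable amount somewhere nearby (your version encodes this implicitly via the upper bound at the far endpoint), but your propagation argument is more quantitative and self-contained, while the paper's picture-based reduction makes the geometry more transparent.
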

\begin{proof}
\begin{figure}[htp]
    \centering
    \includegraphics[width=12cm]{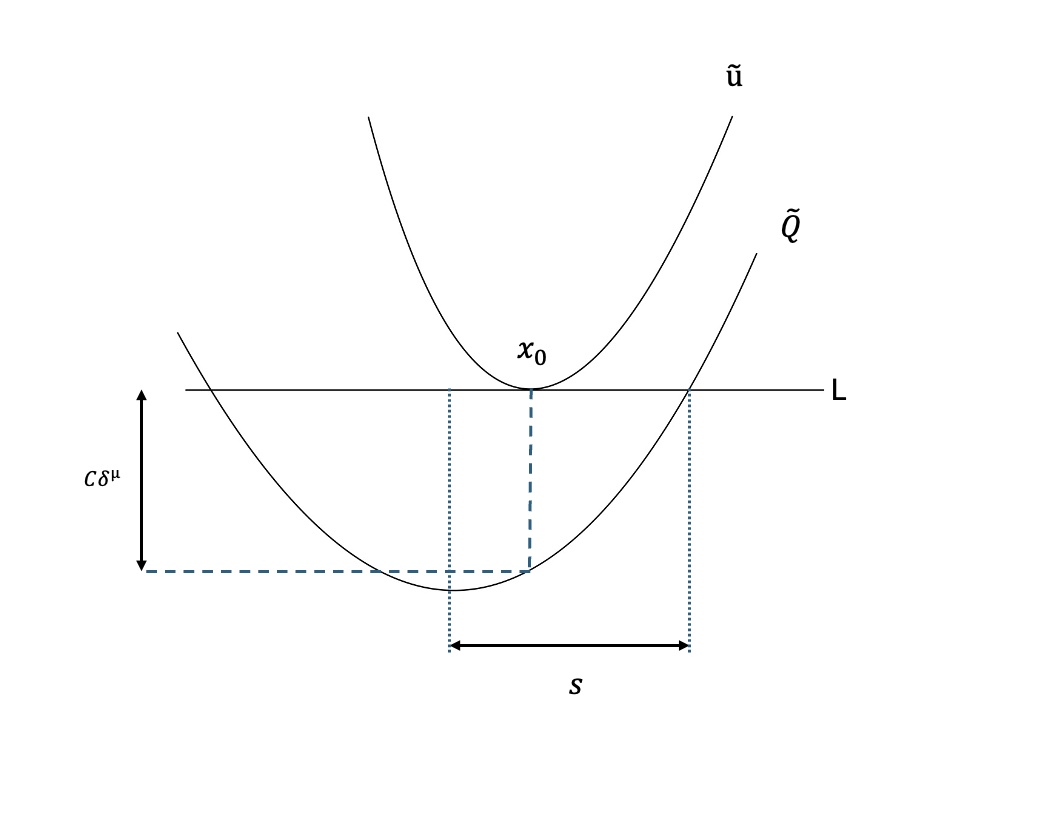}
    \caption{$u$ lies above $Q$}
    \label{fig1}
\end{figure}
Suppose, for contradiction, that the conclusion fails. We divide the proof into two cases. First let us assume $\Tilde{u}(x_0)-Q(x_0)\geq C \delta^\mu$, that is $\Tilde{u}$ lies above $Q$ (see Figure \ref{fig1}), for some large constant $C$ to be fixed later and for some $x_0\in B_\frac{1}{2}$. Up to subtracting a linear function we may assume $\nabla\Tilde{u}(x_0)=0$ (see Figure \ref{fig1}). Let $\tilde{Q}$ be $Q$ minus said linear function. Let $B:=\{x:\tilde{Q}(x)<\Tilde{u}(x_0)\}$  be the ball where $\tilde{Q}$ lies below the tangent of $\Tilde{u}$ at $x_0$.  Define $s$ to be the radius of the ball $B$. Because $\tilde{Q}=\frac{|x|^2}{2}$ up to adding a linear function, we have $s\geq C^{1/2}\delta^{\frac{\mu}{2}}$. Now, $$\int_{B_1}|\Tilde{u}-\tilde{Q}|^2 \geq \int_{B}|\Tilde{u}(x_0)-\tilde{Q}|^2\geq c(n)C^{\frac{n+4}{2}}\delta^{\frac{4+n}{2}\mu} = c(n)C^{\frac{n+4}{2}}\delta^2,$$
a contradiction for $C$ large.\\

In the second case, we assume  $Q(x_0)-\Tilde{u}(x_0)\geq C\delta^\mu$ ($Q$ lies above $\Tilde{u}$) for some large $C$ to be fixed later (see Figure \ref{fig2}). Let $\tilde{Q}$ be $Q$ plus an affine function chosen such that $\nabla \tilde{Q}(x_0)=0$. Add the same linear to $\tilde{u}$ and continue to denote it the same way. As $\fint_{B_1}|\Tilde{u}(x)-\tilde{Q}(x)|^2\lesssim \delta^2$ , we know $\Tilde{u}$ must cross the tangent hyperplane of $\tilde{Q}$ at $x_0$ at least at one point $x_1$ such that $|x_1-x_0|\leq \delta^\frac{\mu}{2}$. Indeed, if this was false, arguing in a similar way as in the first case and using the convexity of $\tilde{u}$ we have 
$$\int_{B_1} |\Tilde{u}-\tilde{Q}|^2\geq \int_{B_{\delta^\frac{\mu}{2}}(x_0)} \left(\tilde{Q}(x_0)-\Tilde{u}\right)^2 \geq c(n)C^2\delta^2,$$ a contradiction for $C$ large. \\
Consider the supporting hyperplane of $\Tilde{u}$ at $x_1$. Call it $L$. From the above we know that $S := |\nabla L| \geq C\delta^{\mu/2}$. It is elementary to show that $$\max (L - \tilde{Q}) = S^2/2 + \nabla L \cdot (x_0 - x_1) \geq S(S/2 - \delta^{\mu/2}) \geq S^2/4,$$ provided $C > 4$. Hence, we conclude
$$\max (L - \tilde{Q}) \geq C^2\delta^{\mu}/4.$$
Since $\tilde{u} \geq L$, taking $B = \{L > \tilde{Q}\}$ and arguing as in the first case, we are done.

\begin{figure}[htp]
\hspace*{-2cm}                                                        
\includegraphics[width=13.2cm]{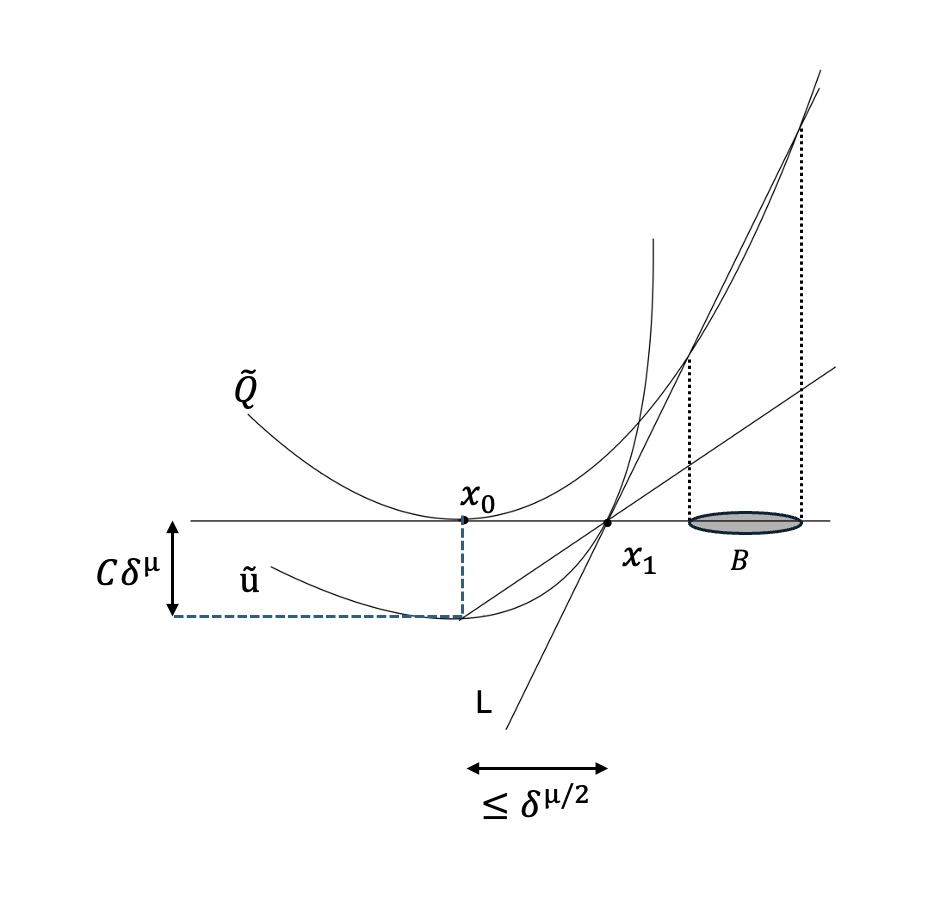}
\caption{$u$ lies below $Q$}
\label{fig2}
\end{figure}

\end{proof}

\section{Harmonic Approximation}\label{harmonic}
Recall  $\rho(t)=T_{t_{\#}}(\rho_0)$ and $j(t):=T_{t_{\#}}\left[\left(T-Id\right)\rho_0\right]$, where $T_t(x):=tT(x)+(1-t)x$. Here we prove that the derivation of the velocity vector field $v:=\frac{\partial j}{\partial\rho}$ can be well approximated by gradient of a harmonic function, if the excess energy is low and the densities $\rho_0$ and $\rho_1$ are close to the uniform density.

 \begin{thm}\label{ev}
Let $\mathcal{E}\ll 1$ and $\gamma\ll 1$. Then there exists a harmonic function $\phi$ such that   \begin{equation}\label{ev1}
    \int_{B_{\frac{1}{2}}}|\nabla\phi|^2\lesssim \int_0^1\int_{B_1}\frac{1}{\rho}|j|^2+\gamma^2,
\end{equation}
and 

\begin{equation}\label{ev2}
    \int_0^1\int_{B_{\frac{1}{2}}}\frac{1}{\rho}|j-\rho\nabla \phi|^2\lesssim \left(\int_0^1\int_{B_1}\frac{1}{\rho}|j|^2\right)^{\frac{n+2}{n+1}}+\gamma^2+\mathcal{E}^\frac{1}{n+2}\int_0^1\int_{B_1}\frac{1}{\rho}|j|^2.
\end{equation}
\end{thm}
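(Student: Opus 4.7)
The plan is to construct $\phi$ from the time-averaged flux and then quantify the approximation via a variational competitor argument that exploits the minimality of $(\rho,j)$.

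\textbf{Constructing $\phi$.} Set $\bar j(x):=\int_0^1 j(t,x)\,dt$. Integrating the continuity equation in $t$ gives $\nabla\cdot\bar j=\rho_0-\rho_1$, which is small in $L^p$ by assumption. Use Fubini on $r\in(\tfrac{1}{2},1)$ to pick a good radius where the traces of $\bar j$ and $\rho(t,\cdot)$ on $\partial B_r$ are controlled by the interior integrals (losing only a universal constant). Define $\phi$ as the solution of the Neumann problem $\Delta\phi=0$ in $B_r$ with $\partial_\nu\phi=\bar j\cdot\nu+c$ on $\partial B_r$, where $c$ is the unique constant making the compatibility condition hold (it absorbs the mass mismatch $\int_{B_r}(\rho_0-\rho_1)$, which is $O(\gamma)$).

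\textbf{Proving \eqref{ev1}.} Integration by parts gives
$$\int_{B_r}|\nabla\phi|^2=\int_{\partial B_r}\phi(\bar j\cdot\nu+c)=\int_{B_r}\nabla\phi\cdot\bar j+\int_{B_r}\phi(\rho_0-\rho_1)+c\int_{\partial B_r}\phi.$$
Cauchy--Schwarz on the first term and Poincar\'e/H\"older on the remaining terms (together with $\|\rho_0-\rho_1\|_{L^p}\lesssim\gamma$) yield $\int_{B_r}|\nabla\phi|^2\lesssim\int_{B_r}|\bar j|^2+\gamma^2$. By Jensen in $t$, $\int|\bar j|^2\leq\int_0^1\int|j|^2$; using Lemma \ref{uniform} to bound $\rho\leq C$ converts this into $\int_0^1\int|j|^2/\rho$, which is \eqref{ev1}.

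\textbf{Proving \eqref{ev2}.} This is the harder estimate. Use the minimality of $(\rho,j)$ for the dynamic cost $\iint|j|^2/\rho$ by comparing it with a competitor $(\tilde\rho,\tilde j)$ whose velocity field is $\nabla\phi$ in the interior and which is glued back to $(\rho,j)$ in a thin collar near $\partial B_r$ so that the continuity equation and the correct endpoint conditions hold. Expanding $|\tilde j|^2/\tilde\rho$ and cancelling the $|\nabla\phi|^2$ terms that appear symmetrically in $(\rho,j)$ and the competitor yields, after some algebra,
$$\int_0^1\!\!\int_{B_{1/2}}\frac{|j-\rho\nabla\phi|^2}{\rho}\lesssim\int_0^1\!\!\int(\rho-1)|\nabla\phi|^2\,dx\,dt+(\text{boundary/gluing remainders}).$$
Bound $\|\nabla\phi\|_{L^\infty(B_{1/2})}$ by Calder\'on--Zygmund estimates for the Neumann problem with $L^p$ data (this is where $p>n$ enters, giving an $L^\infty$ bound on $\nabla\phi$ through Sobolev embedding applied to second derivatives of $\phi$). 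Combining this bound with the $L^1$ density-defect estimate from Lemma \ref{densityclose}, $\int|\rho-1|\lesssim\mathcal{E}^{1/(n+2)}+\sqrt\gamma$, produces the term $\mathcal{E}^{1/(n+2)}\int_0^1\int|j|^2/\rho$. Writing $E:=\int_0^1\int|j|^2/\rho$, the remaining $\sqrt\gamma\cdot E$ contribution is distributed by Young's inequality into $E^{(n+2)/(n+1)}$ (using $E\ll 1$) and $\gamma^2$, yielding \eqref{ev2}.

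\textbf{Main obstacle.} The hardest technical point is the competitor construction: one must glue the harmonic-driven flow to $(\rho,j)$ near $\partial B_r$ in a way that simultaneously respects the continuity equation, matches both endpoint densities, and produces boundary remainders of the right order. Extracting the precise exponent $(n+2)/(n+1)$ in the leading term requires a delicate balance between the $L^\infty$ control on $\nabla\phi$ coming from Calder\'on--Zygmund (which is the simplification over \cite{GHO} that uses $p>n$) and the $L^1$ density defect from Lemma \ref{densityclose}, which itself rests on the displacement bound of Lemma \ref{lem1}.
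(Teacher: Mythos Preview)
Your outline has the right ingredients (good-radius selection, Neumann problem for $\phi$, competitor construction, density defect), but there is a structural confusion in your treatment of \eqref{ev2} that would keep the argument from closing. Two logically separate steps are being merged. First, a \emph{quasi-orthogonality} identity, obtained by just expanding the square and using the boundary condition, gives
\[
\int_0^1\!\!\int \frac{|j-\rho\nabla\phi|^2}{\rho}
\;\lesssim\; \Big[\int_0^1\!\!\int \frac{|j|^2}{\rho} - \int|\nabla\phi|^2\Big] \;+\; \Big|\int_0^1\!\!\int(\rho-1)|\nabla\phi|^2\Big| \;+\; (\text{cross terms in }\delta\rho).
\]
Second, the \emph{competitor} argument bounds the bracketed energy gap by minimality of $(\rho,j)$. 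Your displayed inequality after ``some algebra'' drops the bracket entirely and treats the $(\rho-1)$ term as the main one; your ``Main obstacle'' paragraph then asserts that the exponent $\tfrac{n+2}{n+1}$ arises from balancing $\|\nabla\phi\|_{L^\infty}$ against Lemma~\ref{densityclose}. That is not where it comes from. The $L^\infty$/density-defect balance controls only $\int\!\!\int(\rho-1)|\nabla\phi|^2$ and produces the term $\mathcal{E}^{1/(n+2)}E$ in \eqref{ev2} (with $E=\int_0^1\!\int|j|^2/\rho$). The power $E^{(n+2)/(n+1)}$ comes from the bracket, and specifically from the gluing: the annulus width $r$ in the competitor must satisfy $r\gtrsim E^{1/(n+1)}$ (the dimensional constraint from solving the divergence equation with prescribed flux in a thin layer, Lemma~2.4 of \cite{GO}), and the gluing cost is then $r\cdot E\sim E^{(n+2)/(n+1)}$.

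A second, related issue is where Calder\'on--Zygmund actually enters. In the paper the quasi-orthogonality and competitor steps are run not with $\phi$ but with an auxiliary $\tilde\phi$ solving $-\Delta\tilde\phi=\rho_1-\rho_0$ with Neumann data $\bar f$; this choice makes the cross term $\int_0^1\!\int(j-\nabla\tilde\phi)\cdot\nabla\tilde\phi$ vanish \emph{exactly} via \eqref{IBP}, so the ``terms in $\delta\rho$'' above disappear. One then writes $\tilde\phi=\phi+\hat\phi$ and applies global Calder\'on--Zygmund to $\hat\phi$ (which has $L^p$ source $\delta\rho$ and \emph{constant} Neumann data) to obtain $\|\nabla\hat\phi\|_{L^\infty}\lesssim\gamma$; this is the place where $p>n$ is used. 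The $L^\infty$ bound on $\nabla\phi$ itself needs only interior estimates for harmonic functions, not Calder\'on--Zygmund. Working directly with $\phi$ as you propose leaves uncontrolled cross terms in the quasi-orthogonality step, and your description of the role of $p>n$ is attached to the wrong function.
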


\begin{proof}
Without loss of generality assume $\int\int\frac{1}{\rho}|j|^2\ll 1,$ as otherwise we can just take $\phi=0$.\\
First, we want to choose a good radius $R$ for which we can control the flux of $j$ through $\partial B_R$.\\ 
Use Fubini's theorem and $\rho \leq C$ for some $C>0$ as proved in Lemma \eqref{uniform}  to conclude 
$$\int_{R=\frac{1}{2}}^1\int_{\partial B_R}\int_0^1 |j|^2=\int_{B_1\setminus B_{\frac{1}{2}}}\int_0^1 |j|^2\lesssim \int_0^1 \int_{B_1} \frac{1}{\rho}|j|^2 .$$
Thus we can find a good radius $R$ with $\frac{1}{2}<R<1$ such that
\begin{equation}\label{un1}
\int_{\partial B_R}\int_0^1 |j|^2\lesssim \int_0^1\int_{B_1} \frac{1}{\rho} |j|^2 .
\end{equation}
Using the inequality for every $\zeta \in H^1 (B_R\times (0,1))$, we have
\begin{equation}\label{IBP}
\int_0^1\int_{B_R}\rho\partial_t\zeta +j\cdot \nabla \zeta=\int_0^1\int_{\partial B_R}\zeta f+\int_{B_R} \zeta(\cdot, 1)\rho_1-\zeta(\cdot, 0)\rho_0
\end{equation}
where $f:=j\cdot \nu$ be the normal component of $j$. 
For a proof, look at the paragraph after equation 3.20 in \cite{GO}. In fact in their proof,  they use an $L^2$ bound on $j$. This follows from equation \eqref{un1}. If we examine their proof carefully, we see that you can get equation \eqref{IBP} using just an $L^1$ bound on $j$. This would follow from Cauchy–Schwarz inequality and the fact that $\rho$ is bounded in $L^1$.\\

Now we want to define our harmonic approximation $\phi$, whose gradient well approximates $\frac{\partial j}{\partial\rho}$. Let $\phi$ be the solution to 
\begin{equation}\label{phi}
\begin{cases}
    -\Delta \phi =0\quad \text{in }\;\; B_R\\
    \frac{\partial\phi}{\partial\nu}=\overline{f}+ \frac{1}{\mathcal{H}^{d-1}(\partial B_R)}\int_{B_R}\delta \rho \quad \text{on} \;\; \partial B_R,
\end{cases}
\end{equation}
where $\overline{f}:=\int_0^1 f dt$ and $\delta\rho:=\rho_1-\rho_0$.
Applying \eqref{IBP} with $\zeta=1$ gives $\int_{B_R}\delta\rho =-\int_{\partial B_R}\overline{f}$. This shows that we can solve for $\phi$. \\
We now establish \eqref{ev1}. Apply equation 2.2 from Lemma 2.1 in \cite{GO} with radius $R$ replaced by $R\sim 1$ to get
\begin{equation}\label{unnamed1}
\begin{aligned}
\int_{B_{\frac{1}{2}}}|\nabla \phi|^2 & \leq \int_{B_{R}}|\nabla \phi|^2 \lesssim \int_{\partial B_R}|\overline{f}|^2+||\delta \rho||^2_{L^1(B_R)}\lesssim \int_{\partial B_R}|\overline{f}|^2+||\delta\rho||^2_{p,R} \\
& \leq  \int_{\partial B_R}\int_0^1|j|^2+||\delta\rho||^2_{p,R} 
\leq \int_0^1\int_{B_1}\frac{1}{\rho}|j|^2+\gamma^2.
\end{aligned}
\end{equation}
This proves \eqref{ev1}.
We can in fact conclude something even better if use equation 2.3 from Lemma 2.1 in \cite{GO}. We will have 
\begin{equation}\label{updated1}
||\nabla \phi||_{L^\infty(B_\frac{1}{2})}^2\leq \int_{B_R} |\nabla \phi|^2\lesssim \int_0^1\int_{B_1}\frac{1}{\rho}|j|^2+\gamma^2.
\end{equation}
Now let us define $\Tilde{\phi}$ and $\hat{\phi}$ as following:
\begin{equation*}
\begin{cases}
    -\Delta \Tilde{\phi}  =\delta \rho \quad \text{in }\;\; B_R\\
    \frac{\partial\Tilde{\phi}}{\partial\nu}=\overline{f} \quad \text{on} \;\; \partial B_R,
\end{cases}
\end{equation*}
and 
\begin{equation*}
\begin{cases}
    -\Delta \hat{\phi}  =\delta \rho \quad \text{in }\;\; B_R\\
    \frac{\partial\hat{\phi}}{\partial\nu}= - \frac{1}{\mathcal{H}^{d-1}(\partial B_R)}\int_{B_R}\delta \rho \quad \text{on} \;\; \partial B_R.
\end{cases}
\end{equation*}
Note that $\Tilde{\phi}=\phi+\hat{\phi}$. We will show that it suffices to prove
\begin{equation}\label{phitil1}
    \int_0^1\int_{B_R}\frac{1}{\rho}|j-\rho\nabla\Tilde{\phi}|^2\lesssim \left(\int_0^1\int_{B_1}\frac{1}{\rho}|j|^2\right)^{\frac{n+2}{n+1}}+\gamma^2+\mathcal{E}^\frac{1}{n+2}\int_0^1\int_{B_1}\frac{1}{\rho}|j|^2,
\end{equation}
and
\begin{equation}\label{phitil2}
    ||\nabla \Tilde{\phi}||^2_{L^\infty(B_\frac{1}{2})}\lesssim \int_{B_R}|\nabla \Tilde{\phi}|^2\lesssim \int_0^1\int_{B_R}\frac{1}{\rho}|j|^2+\gamma^2.
\end{equation}
Moreover, for $0<r\ll 1$, if we define $A_r:=B_R\setminus B_{R(1-r)}$, then
\begin{equation}\label{annulus}
    \int_{A_r}|\nabla\Tilde{\phi}|^2\lesssim r\left(\int_{\partial B_R}|\overline{f}|^2+\gamma^2\right).
\end{equation}

Let's now establish \eqref{phitil2}. For that we claim that
\begin{equation}\label{claim}
    ||\nabla\hat{\phi}||_{L^\infty(B_1)}\lesssim \gamma.
\end{equation}
First let us prove this claim.  Using global Calderón–Zygmund estimate(e.g. refer to \cite{FY}), we get $||D^2\hat{\phi}||_{L^p(B_1)}\lesssim \gamma$. By Poincaré inequality, we have $$||\nabla \hat{\phi}-\fint_{B_1}\nabla\hat{\phi}||_{L^p(B_1)}\leq ||D^2\hat{\phi}||_{L^p(B_1)}\lesssim \gamma.$$
Conclude  $$||\nabla \hat{\phi}-\fint_{B_1}\nabla\hat{\phi}||_{W^{1,p}(B_1)}=||\nabla \hat{\phi}-\fint_{B_1}\nabla\hat{\phi}||_{L^p(B_1)}+||D^2\hat{\phi}||_{L^p(B_1)}\lesssim \gamma.$$
By Morrey embedding theorem we have $W^{1,p}(B_1)\subset C^\alpha (\overline{B_1})$. Thus, 
\begin{equation}\label{bdd}
    |\nabla\hat{\phi}-\fint_{B_1}\nabla\hat{\phi}|\leq C\gamma 
\end{equation}
pointwise for all $x\in \overline{B_1}$.\\
Suppose our claim was false. That means for any $C$ we can find $x_0\in \overline{B_1}$ such that $|\nabla \hat{\phi}(x_0)|\geq C \gamma$. This with \eqref{bdd} gives $|\fint_{B_1}\nabla\hat{\phi}|\geq C\gamma$ for the same $C$. Conclude $|\nabla\hat{\phi}|\geq C\gamma$ at all points $x\in\overline{B_1}.$\\
We showed $\nabla\hat{\phi}$ blows up at all points on the boundary. Take $x_1\in \partial B_1$ such that $\hat{\phi}$ achieves its maximum on $\partial B_1$ at $x_1$. Hence, $\frac{\partial \hat{\phi}}{\partial T}=0$ where $T$ denotes the tangent direction. Thus, $\frac{\partial\hat{\phi}}{\partial\nu}$ must blow up. This is a contradiction to the Neumann Boundary condition we assumed in the definition of $\hat{\phi}$. This completes the proof of our claim.\\

This claim, along with \eqref{updated1} immediately gives us \eqref{phitil2} by triangle inequality. Similarly if we use equation 2.4 from \cite{GO} with equation \eqref{claim} above, we can get \eqref{annulus}.\\

Next we assume \eqref{phitil1} and prove \eqref{ev2}. Note that 
$$\int_0^1\int_{B_R} \frac{1}{\rho}|j-\rho\nabla\phi|^2=\int_0^1\int_{B_R}\frac{1}{\rho}|j-\rho\nabla\Tilde{\phi}+\rho\nabla\hat{\phi}|^2$$
$$\lesssim \int_0^1\int_{B_R} \frac{1}{\rho}|j-\rho\nabla\Tilde{\phi}|^2+\int_0^1\int_{B_R}\rho|\nabla\hat{\phi}|^2.$$
The first term is bounded by \eqref{phitil1}. Use Lemma \eqref{uniform} and our previous claim to bound the second term:
$$\int_0^1\int_{B_R}\rho|\nabla\hat{\phi}|^2\lesssim \gamma^2.$$
This completes the proof, except we have not proved equation \eqref{phitil1}. \\

From now on, up to rescaling, assume $R=\frac{1}{2}$. Here we prove that 
\begin{equation}\label{inequality1}
    \int_{0}^1\int_{B_{\frac{1}{2}}}\frac{1}{\rho}|j-\rho\nabla\Tilde{\phi}|^2\lesssim \int_0^1\int_{B_{\frac{1}{2}}}\frac{1}{\rho}|j|^2-  \int_0^1\int_{B_{\frac{1}{2}}} (2-\rho)|\nabla \Tilde{\phi}|^2.
\end{equation}
We have 
\begin{equation*}
\begin{aligned}
\frac{1}{2} \int_0^1\int_{B_{\frac{1}{2}}}\frac{1}{\rho}|j-\rho\nabla\Tilde{\phi}|^2 &= \frac{1}{2} \int_0^1\int_{B_{\frac{1}{2}}}\frac{1}{\rho}|j|^2- \int_0^1\int_{B_{\frac{1}{2}}}j\cdot \nabla \Tilde{\phi}+\frac{1}{2} \int_0^1  \int_{B_{\frac{1}{2}}}\rho |\nabla \Tilde{\phi}|^2 \\
&=\frac{1}{2} \int_0^1\int_{B_{\frac{1}{2}}}\frac{1}{\rho}|j|^2- \int_0^1\int_{B_{\frac{1}{2}}}\left(1-\frac{\rho}{2}\right)|\nabla \Tilde{\phi}|^2\\ & - \int_0^1\int_{B_{\frac{1}{2}}} (j-\nabla\Tilde{\phi})\cdot \nabla\Tilde{\phi}.
\end{aligned}
\end{equation*}
Note that if we put $\zeta=\Tilde{\phi}$ in \eqref{IBP}, we have the last term to be zero. Hence we get \eqref{inequality1}.\\

In the final part of this proof, we establish that 
\begin{equation}\label{inequality3}
\int_0^1\int_{B_{\frac{1}{2}}}\frac{1}{\rho}|j|^2-\int_{B_{\frac{1}{2}}}|\nabla \Tilde{\phi}|^2\lesssim \left(\int_0^1\int_{B_1}\frac{1}{\rho}|j|^2\right)^{\frac{n+2}{n+1}}+\gamma^2.
\end{equation}
First we show this \eqref{inequality3} along with  \eqref{inequality1} will give us \eqref{phitil1}:
\begin{equation}
\begin{aligned}
\int_0^1\int_{B_{\frac{1}{2}}}\frac{1}{\rho}|j-\rho\nabla\Tilde{\phi}|^2 &\lesssim \int_0^1\int_{B_{\frac{1}{2}}}\frac{1}{\rho}|j|^2-\int_0^1\int_{B_{\frac{1}{2}}} (2-\rho)|\nabla \Tilde{\phi}|^2\\
& \lesssim \left(\int_0^1\int_{B_1} \frac{1}{\rho}|j|^2\right)^{\frac{n+2}{n+1}}+\gamma^2-\int_0^1\int_{B_\frac{1}{2}}(1-\rho)|\nabla \Tilde{\phi}|^2
\end{aligned}
\end{equation}
Note that to get \eqref{phitil1} it is enough to prove 
$$\left|\int_0^1\int_{B_\frac{1}{2}}(1-\rho)|\nabla \Tilde{\phi}|^2\right| \lesssim 
\left(\int_0^1\int_{B_1}\frac{1}{\rho}|j|^2\right)^{\frac{n+2}{n+1}}+\gamma^2+\mathcal{E}^\frac{1}{n+2}\int\int\frac{1}{\rho}|j|^2.
$$
We do this by using \eqref{phitil2}, Lemma \ref{densityclose} and Young's inequality:
$$\left|\int_0^1\int_{B_\frac{1}{2}}(1-\rho)|\nabla \Tilde{\phi}|^2\right|\lesssim 
||\nabla\Tilde{\phi}||_{\infty}^2 \int\int_{B_\frac{1}{2}} |\rho-1| \lesssim \left(\sqrt{\gamma}+\mathcal{E}^{\frac{1}{n+2}}\right) \left(\int\int \frac{1}{\rho}|j|^2+\gamma^2\right)$$
$$
\lesssim \left(\int_0^1\int_{B_1}\frac{1}{\rho}|j|^2\right)^{\frac{n+2}{n+1}}+\gamma^2+\mathcal{E}^\frac{1}{n+2}\int\int\frac{1}{\rho}|j|^2.
$$
Thus, all we need to prove is \eqref{inequality3}. By minimality of $(\rho, j)$, it is enough to construct a competitor $(\Tilde{\rho}, \Tilde{j})$ that agrees with $(\rho,j)$ outside $B_{\frac{1}{2}}\times (0,1)$ and that satisfies the upper bound through \eqref{inequality3}.\\
We choose the following   $(\Tilde{\rho}, \Tilde{j})$
\begin{equation}\label{competetor}
      (\Tilde{\rho}, \Tilde{j})
      =
       \begin{cases}
           (t\rho_1+(1-t)\rho_0, \nabla \Tilde{\phi}) \quad \text{in}\;\; B_{\frac{1}{2}(1-r)}\times (0,1)\\
           (t\rho_1+(1-t)\rho_0+s, \nabla \Tilde{\phi}+q) \quad \text{in}\;\;A_r\times (0,1) ,
       \end{cases}
    \end{equation}
with $(s,q)\in \Lambda$ where $\Lambda$ is defined in Lemma 2.4 from \cite{GO} with $f$ replaced by $f-\overline{f}$ and the radius $1$ replaced by radius $\frac{1}{2}$. \\
Note that if $|s|\leq \frac{1}{2C_0}$, then $\Tilde{\rho}\geq \frac{1}{2C_0}$. In $B_{\frac{1}{2}(1-r)}$ we have $\Tilde{\rho}\geq \frac{1}{2C_0}$ as both $\rho_0$ and $\rho_1$ are greater than $\frac{1}{C_0}$. In $A_r$, we have $\Tilde{\rho}\geq \frac{1}{C_0}-\frac{1}{2C_0}\geq \frac{1}{2C_0}.$ 
Note that by definition of $\Tilde{\phi}$, our choices for $(\Tilde{\rho},\Tilde{j})$ are admissible.\\
By Lemma 2.4 in \cite{GO} (here we use a modified version of the Lemma: basically we take $|s|\leq \frac{1}{2C_0}$ instead of $|s|\leq \frac{1}{2}$ in the statement of the lemma), if 
\begin{equation}\label{un3}
    r\gg \left(\int_0^1\int_{B_{\frac{1}{2}}}(f-\overline{f})^2\right)^{\frac{1}{n+1}},
\end{equation}
we may choose $(s,q)\in \Lambda$ such that
\begin{equation}\label{inequality2}
\int_0^1\int_{A_r}|q|^2\lesssim r\int_0^1\int_{\partial B_{\frac{1}{2}}}(f-\overline{f})^2.
\end{equation}
By definition of $(\Tilde{\rho}, \Tilde{j}),$

$$\int_0^1\int_{B_{\frac{1}{2}}}\frac{1}{\Tilde{\rho}}|\Tilde{j}|^2-\int_{B_{\frac{1}{2}}}|\nabla \Tilde{\phi}|^2
$$
$$
\lesssim 
\int_0^1\int_{B_{\frac{1}{2}(1-r)}}\frac{1}{t\rho_1+(1-t)\rho_0}|\nabla(\Tilde{\phi})|^2
-\int_{B_{\frac{1}{2}(1-r)}} |\nabla(\Tilde{\phi})|^2+\int_0^1\int_{A_r}\frac{1}{\Tilde{\rho}}|\nabla\Tilde{\phi}+q|^2,
$$
where we have replaced the second term in the right-hand side by an integration on a smaller domain. \\
The first two terms on RHS can be estimated as follows
\begin{equation*}
\begin{aligned}
\int_0^1\int_{B_{\frac{1}{2}(1-r)}}\left(\frac{1}{t\rho_1+(1-t)\rho_0}-1\right)|\nabla\Tilde{\phi}|^2 &\lesssim  \int_0^1\int_{B_\frac{1}{2}(1-t)} (1-t)(1-\rho_0) |\nabla\Tilde{\phi}|^2\\
&\lesssim ||\rho_0-1||_{L^1} ||\nabla\Tilde{\phi}||_{L^\infty}^2 \\
&\lesssim ||\rho_0-1||_{L^p}  ||\nabla\Tilde{\phi}||_{L^\infty}^2\\
&\lesssim \gamma \left(\int\int\frac{1}{\rho}|j|^2+\gamma^2\right),
\end{aligned}
\end{equation*}
where in the first inequality we use $(1-t)\rho_0+t\rho_1$ is bounded from below in $B_{\frac{1}{2}(1-r)}$.\\
To estimate the last term use \eqref{inequality2}, \eqref{annulus} and \eqref{un1} respectively
\begin{equation*}
\begin{aligned}
\int_0^1\int_{A_r}\frac{1}{\Tilde{\rho}}|\nabla \Tilde{\phi}+q|^2&\lesssim 
\int_0^1\int_{A_r}|\nabla\Tilde{\phi}|^2+|q|^2 \\
& \lesssim \int_{A_r}|\nabla\Tilde{\phi}|^2+\int_0^1\int_{A_r}|q|^2\\
& \lesssim \int_{A_r}|\nabla\Tilde{\phi}|^2+r\int_0^1\int_{\partial B_{\frac{1}{2}}}(f-\overline{f})^2 \\
& \lesssim r\left(\int_{\partial B_{\frac{1}{2}}}\overline{f}^2+\gamma^2\right)+r\int_0^1\int_{\partial B_{\frac{1}{2}}}(f-\overline{f})^2 \\
&\lesssim r\left(\gamma^2+\int\int f^2\right)\\
& \lesssim r\left(\gamma^2+\int\int \frac{1}{\rho}|j|^2\right)
\end{aligned}
\end{equation*}
Choose $r$ to be large multiple of (order one though) $\left(\int_0^1\int_{B_{\frac{1}{2}}}(f-\overline{f})^2\right)^{\frac{1}{n+1}}$ to make sure \eqref{inequality2} holds. Note that,
\begin{equation*}
\begin{aligned}
\left(\int_0^1\int_{B_{\frac{1}{2}}}(f-\overline{f})^2\right)^\frac{1}{n+1} & \lesssim 2\left(\int_0^1\int_{\partial B_{\frac{1}{2}}} f^2+ \int_{\partial B_\frac{1}{2}}\overline{f}^2 \right)^{\frac{1}{n+1}} \lesssim \left(\int \int f^2\right)^\frac{1}{n+1}\\
&\lesssim \left(\int\int \frac{1}{\rho}|j|^2\right)^\frac{1}{n+1} ,
\end{aligned}
\end{equation*}
where we use \eqref{un1} to get the last inequality.\\
Combining last two calculations, if we denote $a=\int\int\frac{1}{\rho}|j|^2$, then by Young's inequality we have
\begin{equation*}
\begin{aligned}
\int_0^1\int_{B_{\frac{1}{2}}}\frac{1}{\Tilde{\rho}}|\Tilde{j}|^2-\int_{B_{\frac{1}{2}}}|\nabla \Tilde{\phi}|^2 & \lesssim \gamma(a+\gamma^2)+a^{\frac{1}{n+1}}(a+\gamma^2)\\
& \lesssim (\gamma+a^{\frac{1}{n+1}})(a+\gamma^2) =a^\frac{n+2}{n+1}+a\gamma+a^{\frac{1}{n+1}}\gamma^2+\gamma^2 \\
& \lesssim a^\frac{n+2}{n+1}+\gamma^2.
\end{aligned}
\end{equation*}
This finishes the proof of equation \eqref{inequality3} and we are done.
\end{proof}

Motivated by the theory of minimal surface equations, we next prove that $T-x$ is well approximated by $\nabla \phi$. 
\begin{lem}\label{lag}
Let $T$ be the optimal transport map that takes the density $\rho_0$ to $\rho_1$. Then there is a harmonic map $\phi$ in $B_\frac{1}{8}$ such that
\begin{equation}\label{lag1}
\int_{B_\frac{1}{8}}|T-x-\nabla\phi|^2\rho_0\lesssim \mathcal{E}^\frac{n+3}{n+2}+\gamma^2,
\end{equation}
and
\begin{equation}\label{lag2}
\int_{B_\frac{1}{8}}|\nabla\phi|^2\lesssim \mathcal{E} +\gamma^2.
\end{equation}
\end{lem}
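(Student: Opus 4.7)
The plan is to convert the Eulerian estimate \eqref{ev2} into the Lagrangian estimate \eqref{lag1}; the second bound \eqref{lag2} is an immediate restriction of \eqref{ev1} from $B_{1/2}$ to $B_{1/8}\subset B_{1/2}$. Let $\phi$ be the harmonic function produced by Theorem \ref{ev}, defined and harmonic on $B_{1/2}$. The velocity field $v := j/\rho$ transports $\rho_0$ along straight lines: its flow is exactly $T_t(x) = x + t(T(x)-x)$, so that $v(t, T_t(x)) = T(x) - x$ for $\rho_0$-a.e.\ $x$ and every $t$. By Lemma \ref{lem1}, the trajectory $\{T_t(x) : t \in [0,1]\}$ stays inside $B_{3/16}$ whenever $x \in B_{1/8}$, so nothing leaves the ball $B_{1/2}$ on which the Eulerian estimate is available.

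The core step is the decomposition
\begin{equation*}
T(x) - x - \nabla \phi(x) = \int_0^1 \bigl[v(t, T_t(x)) - \nabla \phi(T_t(x))\bigr]\, dt + \int_0^1 \bigl[\nabla \phi(T_t(x)) - \nabla \phi(x)\bigr]\, dt,
\end{equation*}
to which I apply Cauchy--Schwarz and bound $|T-x-\nabla\phi|^2$ by twice the sum of the squared integrals. Integrating the first piece against $\rho_0$ over $B_{1/8}$, swapping integrals, and using the push-forward $(T_t)_{\#}\rho_0 = \rho(t)$ converts it into
\begin{equation*}
\int_0^1 \int_{T_t(B_{1/8})} \rho(t,y)\, |v(t,y) - \nabla \phi(y)|^2\, dy\, dt \;\le\; \int_0^1 \int_{B_{1/2}} \frac{1}{\rho}|j - \rho\nabla\phi|^2,
\end{equation*}
which by \eqref{ev2} combined with the identity $\int_0^1\int \frac{1}{\rho}|j|^2 = \int |T-x|^2 \rho_0 \lesssim \mathcal{E}$ is bounded by $\mathcal{E}^{(n+2)/(n+1)} + \gamma^2 + \mathcal{E}^{(n+3)/(n+2)} \lesssim \mathcal{E}^{(n+3)/(n+2)} + \gamma^2$ (the exponent $(n+3)/(n+2)$ being the smallest of the three).

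For the second piece I would use interior regularity for the harmonic function $\phi$ on $B_{3/16}\subset B_{1/2}$ together with the $L^\infty$ gradient bound \eqref{updated1} to obtain $\|D^2\phi\|_{L^\infty(B_{3/16})}^2 \lesssim \mathcal{E} + \gamma^2$. Since $|T_t(x) - x| \le |T(x) - x|$, this gives
\begin{equation*}
\int_{B_{1/8}} \rho_0(x) \int_0^1 |\nabla\phi(T_t(x)) - \nabla\phi(x)|^2\, dt\, dx \lesssim \|D^2\phi\|_\infty^2 \int_{B_{1/8}} |T-x|^2\rho_0 \lesssim (\mathcal{E} + \gamma^2)\mathcal{E},
\end{equation*}
which is absorbed into $\mathcal{E}^{(n+3)/(n+2)} + \gamma^2$ since $\mathcal{E}^2 \le \mathcal{E}^{(n+3)/(n+2)}$ for $\mathcal{E}\ll 1$. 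The main obstacle is the Eulerian-to-Lagrangian conversion of the first piece: one must line up $\rho(t,y)|v-\nabla\phi|^2$ with the Eulerian quantity $\frac{1}{\rho}|j-\rho\nabla\phi|^2$ controlled by Theorem \ref{ev}, which relies on the a.e.\ invertibility of $T_t$ and on the fact that the flow of $v$ is the straight-line interpolation $T_t$ --- precisely the feature that forces the splitting off of the harmonic remainder $\nabla\phi(T_t(x)) - \nabla\phi(x)$. Once the decomposition is set up, the rest is a change of variables, Cauchy--Schwarz, and interior estimates for harmonic functions.
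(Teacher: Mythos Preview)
Your proposal is correct and follows essentially the same argument as the paper: the identical add-and-subtract decomposition $T - x - \nabla\phi = (v - \nabla\phi\circ T_t) + (\nabla\phi\circ T_t - \nabla\phi)$, the push-forward change of variables converting the first piece into the Eulerian quantity controlled by \eqref{ev2}, and the interior Hessian bound on the harmonic $\phi$ together with $|T_t(x)-x|\le |T(x)-x|$ for the second. The only cosmetic difference is that the paper applies Theorem~\ref{ev} at half scale (so $\phi$ is harmonic on $B_{1/4}$ and the Eulerian bound lives on $B_{1/2}$), which pairs directly with the containment $T_t^{-1}(B_{1/2})\subset B_{3/4}$ from Lemma~\ref{lem1}; your direct application works equally well since $\rho_0$ is supported in $B_1$.
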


\begin{proof}
First note that we may assume $\mathcal{E}+\gamma^2\ll 1$ since otherwise we can take $\phi=0$. We know that the velocity vector field $v:=\frac{\partial j}{\partial \rho}$ satisfies $v(T_t(x),t)=T(x)-x$ for a.e. $x\in supp(\rho_0)$ (see for example Theorem 8.1 in \cite{CV}). Thus, using \eqref{lem1} we have:

\begin{equation}\label{lv1}
\begin{aligned}
\int_0^1\int_{B_{\frac{1}{2}}}\frac{1}{\rho}|j|^2
&=\int_0^1\int_{B_\frac{1}{2}}|v|^2\;d\rho\\
&\lesssim \int_0^1\int_{T_t^{-1}(B_\frac{1}{2})}|T(x)-x|^2\rho_0 dx\\
&\lesssim \int_0^1\int_{B_1}|T-x|^2\rho_0\\
&=\mathcal{E}.
\end{aligned}
\end{equation}
By Lemma \ref{ev}, we can find a harmonic function $\phi$ in $B_\frac{1}{4}$ such that 

\begin{equation}\label{lv2}
\int_{B_{\frac{1}{4}}}|\nabla \phi|^2 \lesssim \int_0^1\int_{B_1}\frac{1}{\rho}|j|^2+\gamma^2,
\end{equation}
and 
\begin{equation}\label{lv3}
\begin{aligned}
\int_0^1\int_{B_{\frac{1}{4}}}\frac{1}{\rho}|j-\rho\nabla \phi|^2&\lesssim \left(\int_0^1\int_{B_\frac{1}{2}}\frac{1}{\rho}|j|^2\right)^{\frac{n+2}{n+1}}+\gamma^2+\mathcal{E}^\frac{1}{n+2}\int\int_{B_\frac{1}{2}}\frac{1}{\rho}|j|^2\\
&\lesssim \mathcal{E}^\frac{n+3}{n+2}+\gamma^2.
\end{aligned}
\end{equation}
For the second inequality above, we use \eqref{lv1}.
Using \eqref{lv1} and \eqref{lv2}, we get \eqref{lag2}.\\

We now prove \eqref{lag1}. By triangle inequality we have
\begin{equation}\label{lv4}
\begin{aligned}
\int_{B_\frac{1}{8}}|T-(x+\nabla\phi)|^2\rho_0\lesssim \int_0^1\int_{B_\frac{1}{8}}|T-x+\nabla\phi\circ T_t|^2\rho_0+\int_0^1\int_{B_\frac{1}{8}}|\nabla \phi-\nabla\phi\circ T_t|^2\rho_0.
\end{aligned}  
\end{equation}
Recall $|T_t(x)-x|\leq |T(x)-x|$ for all $t$ by definition of $T_t$ and we use this to estimate the second term on the right-hand side of \eqref{lv4}:

\begin{equation*}
\begin{aligned}
    \int_0^1\int_{B_\frac{1}{8}}|\nabla \phi-\nabla\phi\circ T_t|^2\rho_0
    & \lesssim \sup_{B_\frac{3}{16}}|D^2 \phi|^2\int_0^1\int_{B_\frac{1}{8}}|T_t-x|^2\rho_0\\
    &\lesssim \left( \int_{B_\frac{1}{4}}|\nabla \phi|^2\right) \mathcal{E}\\
    & \lesssim \mathcal{E}(\mathcal{E}+\gamma^2),
\end{aligned}
\end{equation*}
where we have used Lemma \eqref{lem1} and equation 2.3 in Lemma 2.1 in \cite{GO}.\\
Recall $v:=\frac{\partial j}{\partial \rho}$ satisfies $v(T_t(x),t)=T(x)-x$. This implies 
$$T(x)-x-\nabla\phi( T_t(x))=v(T_t(x),t)-\nabla\phi(T_t(x))$$
$$\implies T(x)-(x+\nabla\phi (T_t(x)))=\left(v(t,\cdot)-\nabla\phi\right)(T_t(x))$$ which holds for a.e. $x\in \operatorname{supp}(\rho_0)$. Let's now estimate the first term in the right-hand side of \eqref{lv4} using  \eqref{lv1} and \eqref{lv3}:

\begin{equation*}
\begin{aligned}
\int_0^1\int_{B_\frac{1}{8}}|T-x+\nabla\phi\circ T_t|^2\rho_0 &=\int_0^1\int_{T_t(B_\frac{1}{8})}|v-\nabla\phi|^2d\rho\\
& =\int_0^1\int_{T_t(B_\frac{1}{8})}\frac{1}{\rho}|j-\rho\nabla\phi|^2\\
& \lesssim \int_0^1\int_{B_\frac{1}{4}}\frac{1}{\rho}|j-\rho\nabla\phi|^2\\
&\lesssim \mathcal{E}^\frac{n+3}{n+2}+\gamma^2.
\end{aligned}
\end{equation*}
This completes the proof.
\end{proof}

\section{one-step improvement}\label{ota}
Next, we prove that if, at a certain scale $R$, $T$ is close to identity, that is if $$\mathcal{E}+\left(||\rho_0||_{p,R}^2+||\rho_1||_{p,R}^2\right)\ll 1,$$ then on scale $R\theta$, after an affine change of coordinates, it is even closer to identity by a geometric factor. Together with \eqref{lag1} from Lemma \ref{lag}, equation 2.3 from Lemma 2.1 in \cite{GO}, we complete the proof.

\begin{prop}\label{onestep}
For all $0<\alpha'<1$ there exists $\theta(n,\alpha,\alpha')>0$, $\epsilon(n,\alpha,\alpha')>0$ and $C_\theta(n,\alpha,\alpha')$ with the property that for all $R>0$ such that
\begin{equation}\label{osi}
  \mathcal{E}(\rho_0,\rho_1,T,R)+||\rho_0||_{p,R}^2+||\rho_1||_{p,R}^2\leq \epsilon
\end{equation}
there exists symmetric matrix $B\in SL_n(\mathbb{R})$ and a vector $b$ such that if we let $\hat{x}:=B^{-1}x$,
 $\hat{y}:= B(y-b)$,
$\hat{T}(\hat{x})=B(T(x)-b)$, $\hat{\rho_0}(\hat{x}):=\rho_0(x)$ and $\hat{\rho_1}(\hat{y})=\rho_1(y)$, then 
$$\mathcal{E}(\hat{\rho_0}, \hat{\rho_1},\hat{T},\theta R)\leq \theta^{2\alpha'}\mathcal{E}(\rho_0,\rho_1, T,R)+C_\theta \left(||\rho_0||_{p,R}^2+||\rho_1||_{p,R}^2\right).$$
Moreover, 
$$|B-Id|^2+\frac{1}{R^2}|b|^2\lesssim \mathcal{E}(\rho_0,\rho_1, T,R)+||\rho_0||_{p,R}^2+||\rho_1||_{p,R}^2.$$
\end{prop}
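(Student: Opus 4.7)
By rescaling $x \mapsto Rx$ (which preserves the structure of the problem and the quantity $\mathcal{E}$), I would reduce to the case $R = 1$ and set $\gamma^2 := \|\rho_0\|_{p,1}^2 + \|\rho_1\|_{p,1}^2$. Applying Lemma \ref{lag}, I produce a harmonic function $\phi$ on $B_{1/8}$ with
\begin{equation*}
\int_{B_{1/8}}|T - x - \nabla\phi|^2 \rho_0 \lesssim \mathcal{E}^{(n+3)/(n+2)} + \gamma^2, \qquad \int_{B_{1/8}}|\nabla\phi|^2 \lesssim \mathcal{E} + \gamma^2.
\end{equation*}
Interior derivative estimates for harmonic functions then give $|\nabla\phi(0)|^2 + |D^2\phi(0)|^2 + \|D^3\phi\|_{L^\infty(B_{1/16})}^2 \lesssim \mathcal{E} + \gamma^2$, so that the Taylor remainder $q(x) := \nabla\phi(x) - \nabla\phi(0) - D^2\phi(0)x$ obeys $|q(x)|^2 \lesssim |x|^4(\mathcal{E} + \gamma^2)$ on $B_{1/16}$.

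Next I choose the affine change of coordinates. Set $b := \nabla\phi(0)$ and $A := I + D^2\phi(0)$. Because $\phi$ is harmonic we have $\operatorname{tr} D^2\phi(0) = 0$, which gives $\det A = 1 + O(|D^2\phi(0)|^2) = 1 + O(\mathcal{E} + \gamma^2)$. Define
\begin{equation*}
B := (\det A)^{1/(2n)} A^{-1/2},
\end{equation*}
which is symmetric with $\det B = 1$, satisfies $|B - I|^2 \lesssim \mathcal{E} + \gamma^2$, and differs from $A^{-1/2}$ by $O(\mathcal{E} + \gamma^2)$; in particular $B^{-2} = A + O(\mathcal{E} + \gamma^2)$. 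This immediately yields the second conclusion of the proposition.

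To compute the rescaled excess, change variables $x = B\hat x$ (so $dx = d\hat x$ since $\det B = 1$) to write
\begin{equation*}
\mathcal{E}(\hat\rho_0,\hat\rho_1,\hat T,\theta) = \frac{1}{\theta^2|B_\theta|}\int_{B\cdot B_\theta}\bigl|B\bigl(T(x) - b - B^{-2}x\bigr)\bigr|^2 \rho_0(x)\,dx.
\end{equation*}
Decompose the integrand as
\begin{equation*}
T(x) - b - B^{-2}x = \underbrace{(T(x) - x - \nabla\phi(x))}_{=:r(x)} + q(x) + O\!\bigl((\mathcal{E}+\gamma^2)\,|x|\bigr),
\end{equation*}
where the $O$-term accounts for $B^{-2} - A$. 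Since $|B| \lesssim 1$ and $B\cdot B_\theta \subset B_{2\theta}$, I bound
\begin{equation*}
\mathcal{E}(\hat\rho_0,\hat\rho_1,\hat T,\theta) \lesssim \theta^{-(n+2)}\!\int_{B_{2\theta}}|r|^2\rho_0 + \theta^{-(n+2)}\!\int_{B_{2\theta}}\bigl(|q|^2 + |x|^2(\mathcal{E}+\gamma^2)^2\bigr)\rho_0.
\end{equation*}
The first piece is controlled by Lemma \ref{lag} and bounded by $C\theta^{-(n+2)}(\mathcal{E}^{(n+3)/(n+2)} + \gamma^2)$, while the second, using the Taylor bound on $q$, is at most $C\theta^2(\mathcal{E} + \gamma^2)$.

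Finally, given $\alpha' \in (0,1)$, I choose $\theta$ small so that $C\theta^2 \leq \tfrac{1}{2}\theta^{2\alpha'}$, and then $\epsilon = \epsilon(n,\alpha',\theta)$ small enough that $C\theta^{-(n+2)}\mathcal{E}^{1/(n+2)} \leq \tfrac{1}{2}\theta^{2\alpha'}$ whenever $\mathcal{E} \leq \epsilon$. This gives the desired improvement $\mathcal{E}(\hat\rho_0,\hat\rho_1,\hat T,\theta) \leq \theta^{2\alpha'}\mathcal{E} + C_\theta\gamma^2$. The fact that $\hat T = \nabla\hat u$ with $\hat u(\hat x) = u(B\hat x) - b\cdot B\hat x$ remains convex, and hence is the optimal transport from $\hat\rho_0$ to $\hat\rho_1$, is immediate from $B^T = B$. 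The main obstacle in the argument is constructing $B$ as a \emph{symmetric and unimodular} matrix: only the harmonicity of $\phi$ (which forces $\operatorname{tr} D^2\phi(0) = 0$) makes the determinant normalization cost a quadratic, hence negligible, error. A secondary technicality is bookkeeping of the transformed density norms, which only enters through the bounded factor $|\det B| = 1$ and the inclusion $B\cdot B_\theta \subset B_{2\theta}$.
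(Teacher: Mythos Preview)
Your proof is correct and follows essentially the same route as the paper: rescale to $R=1$, invoke Lemma~\ref{lag}, build the affine correction from the first two Taylor coefficients of $\phi$ at the origin, split $T(x)-b-B^{-2}x$ into the harmonic-approximation error, the Taylor remainder of $\nabla\phi$, and the matrix error $B^{-2}-(I+D^2\phi(0))$, then fix $\theta$ and finally $\epsilon$. The only noteworthy difference is your construction of $B$: the paper sets $B=e^{-D^2\phi(0)/2}$, which is \emph{exactly} unimodular by harmonicity and gives $|B^{-2}-I-D^2\phi(0)|\lesssim |D^2\phi(0)|^2$ directly from the exponential series, whereas you take $B=(\det A)^{1/(2n)}A^{-1/2}$ with $A=I+D^2\phi(0)$ and use $\det A=1+O(|D^2\phi(0)|^2)$ to control the normalization error---both yield the same quadratic error and the argument goes through identically (just note that your $(\mathcal{E}+\gamma^2)^2$ term is absorbed into $\theta^2(\mathcal{E}+\gamma^2)$ only after $\theta$ is fixed and $\epsilon$ chosen small, exactly as in the paper).
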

\begin{proof}

By rescaling we may assume $R=1$. For this proof, let $\mathcal{E}=\mathcal{E}(\rho_0,\rho_1, T, R=1)$. Let $\phi$ come from Lemma \ref{lag}. Define $b:=\nabla\phi(0)$, $A=\nabla^2\phi(0)$ and $B:=e^{-\frac{A}{2}}$. Clearly, $$\det(B)=\det(e^{-\frac{A}{2}})=e^{-\frac{tr(A)}{2}}=e^{-\frac{\Delta \phi}{2}}=e^0=1$$
where we use $\Delta\phi=0$. Using equation 2.3 from Lemma 2.1 in \cite{GO} and Lemma \ref{lag}
$$|B-Id|^2+|b|^2\lesssim \mathcal{E}(\rho_0,\rho_1, T,R)+||\rho_0||_{p,R}^2+||\rho_1||_{p,R}^2  .$$
Now, we have
\begin{equation*}
\begin{aligned}
\fint_{B_\theta}|\hat{T}-\hat{x}|^2\rho_0&=\fint_{B_\theta}|B(T(x)-b)-B^{-1}x|^2\rho_0\\
&= \fint_{BB_\theta}|T(x)-(B^{-2}x+b)|^2\rho_0\\
&\lesssim \fint_{B_{2\theta}}|T(x)-(B^{-2}x+b)|^2\rho_0\\
&\lesssim \fint_{B_{2\theta}}|T(x)-(x+\nabla\phi)|^2\rho_0+\fint_{B_{2\theta}}|(B^{-2}-Id-A)x|^2\rho_0\\
&+\fint_{B_{2\theta}}|\nabla\phi-b-Ax|^2\rho_0\\
& \lesssim \fint_{B_{2\theta}}|T(x)-(x+\nabla\phi)|^2\rho_0+ \theta^{2}|B^{-2}-Id-A|^2+\\ & \sup_{B_{2\theta}}|\nabla\phi -b-Ax|^2\\
&=\mathcal{I}+\mathcal{II}+\mathcal{III},
\end{aligned}
\end{equation*}
where 

$$\mathcal{I}:=\fint_{B_{2\theta}}|T(x)-(x+\nabla\phi)|^2\rho_0,$$
$$\mathcal{II}:=\theta^{2}|B^{-2}-Id-A|^2,$$ and
$$\mathcal{III}:=\sup_{B_{2\theta}}|\nabla\phi -b-Ax|^2.$$
Now as per Lemma \ref{lag}, we have the the first term 
$$\mathcal{I}\lesssim \theta^{-n}\left(\mathcal{E}^{\frac{n+3}{n+2}}+\gamma^2\right).$$
The second term can be estimated using Taylor series expansion 
$$|B^{-2}-Id-\nabla^2\phi(0)|^2=|e^A-Id-A|^2=\frac{|A|^4}{2}=\frac{|\nabla^2\phi(0)|^4}{2}.$$
Hence, $\mathcal{II}\lesssim \theta^{2} |\nabla^2\phi(0)|^4$.\\
Now ,
$$\sup_{B_{2\theta}}|\nabla\phi -b-Ax|^2=\sup_{B_{2\theta}}|f(x)-f(0)-\nabla f(0)x|^2,$$
where $f(x)=\nabla\phi(x)$. Use Taylor series expansion for $\nabla\phi$, we get 
$$\mathcal{III}\lesssim \sup_{B_{2\theta}}|x^T\nabla^2f(0)x|^2=\theta^{4}|\nabla^3\phi(0)|^2 .$$
Combining and multiplying by $\theta^{-2}$ on both sides,
\begin{equation*}
\begin{aligned}
\theta^{-2}\fint_{B_\theta}|\hat{T}-\hat{x}|^2\rho_0 &\lesssim
\theta^{-(n+2)}\mathcal{E}^\frac{n+2}{n+1}+\theta^{-(n+2)}||\rho_0||_{p,1}^2+\theta^{-(n+2)}||\rho_1||_{p,1}^2+|\nabla^2\phi(0)|^4+\theta^2|\nabla^3\phi(0)|^2.
\end{aligned}
\end{equation*}
Now using equation 2.3 from \cite{GO} and \eqref{lag2},  we have 

$$|\nabla^2\phi(0)|^4 \lesssim \left(\int_{B_1}|\nabla\phi|^2\right)^2 \lesssim \left(\mathcal{E}+\gamma^2\right)^2.$$
Similarly, we will get 
$$\theta^2|\nabla^3\phi(0)|^2 \lesssim \theta^2\left(\mathcal{E}+\gamma^2\right).$$
Hence, we have
\begin{equation*}
\begin{aligned}
 \theta^{-2}\fint_{B_\theta}|\hat{T}-\hat{x}|^2\rho_0 & \lesssim 
\theta^{-(n+2)}\mathcal{E}^{\frac{n+3}{n+2}}+\theta^{-(n+2)}\left(||\rho_0||_{p,1}^2+||\rho_1||_{p,1}^2\right) \\
& 
+\left(\mathcal{E}+||\rho_0||_{p,1}^2+||\rho_1||_{p,1}^2\right)^2+\theta^2\left(\mathcal{E}+||\rho_0||_{p,1}^2+||\rho_1||_{p,1}^2\right).
\end{aligned}
\end{equation*}
As we have assumed $\mathcal{E}+||\rho_0||_{p,1}^2+||\rho_1||_{p,1}^2\ll 1$, we can absorb second term of RHS on the third term($\mathcal{E}+||\rho_0||_{p,1}^2+||\rho_1||_{p,1}^2< \theta^2$ as $\theta$ to be fixed).
\begin{equation*}
\begin{aligned}
\theta^{-2}\fint_{B_\theta}|\hat{T}-\hat{x}|^2\rho_0 & \lesssim 
\theta^{-(n+2)}\mathcal{E}^{\frac{n+3}{n+3}}+\theta^{-(n+2)}\left(||\rho_0||_{p,1}^2+||\rho_1||_{p,1}^2\right)\\ &+\theta^2\left(\mathcal{E}+||\rho_0||_{p,1}^2+||\rho_1||_{p,1}^2\right).
\end{aligned}
\end{equation*}
Now we absorb the term with $\theta^2\left(||\rho_0||^2_{p,1}+||\rho_1||^2_{p,1}\right)$ in $\theta^{-(n+2)}\left(||\rho_0||_{p,1}^2+||\rho_1||^2_{p,1}\right)$ as $\theta$ to be chosen is small (less than 1):  
$$\theta^{-2}\fint_{B_\theta}|\hat{T}-\hat{x}|^2\rho_0\leq 
C\theta^{-(n+2)}\mathcal{E}^{\frac{n+3}{n+2}}+C\theta^{-(n+2)}\left(||\rho_0||_{p,1}^2+||\rho_1||_{p,1}^2\right)+C\theta^2\mathcal{E}.$$
Next, fix $\theta$ small so that $C\theta^2 \leq \frac{1}{2}\theta^{2\alpha'}$. This is possible because $\alpha'<1$. As we have $\mathcal{E}\ll 1$, conclude $C\theta^{-(n+2)}\mathcal{E}^\frac{n+3}{n+2}\leq \frac{1}{2}\theta^{2\alpha'}\mathcal{E}$. 
Thus we get $$\theta^{-2}\fint_{B_\theta}|\hat{T}-\hat{x}|^2\rho_0\leq \theta^{2\alpha'}\mathcal{E}+C_\theta \left(||\rho_0||_{p,1}^2+||\rho_1||_{p,1}^2\right).$$
\end{proof}


\pagebreak
\section{$\epsilon$ Regularity}
We now want to iterate our previous step to get $\epsilon$ regularity:
\begin{thm}\label{epsilon}
Let $||\rho_0||_{p,r}\leq \sqrt{\epsilon_1}r^\alpha$ for all $r>0$ and $$\mathcal{E}(\rho_0,\rho_1,T,R)+||\rho_0||_{p,R}^2+||\rho_1||_{p,R}^2\leq \epsilon_1$$
for some $\epsilon_1$ fixed. Then 
$$\sup_{r\leq \frac{R}{2}}\min_{A,b}\frac{1}{r^{2(1+\alpha)}}\fint_{B_r(0)}|T-(Ax+b)|^2\lesssim R^{-2\alpha}\left(\mathcal{E}(\rho_0,\rho_1,T,R)+||\rho_0||^2_{p,R}+||\rho_1||^2_{p,R}\right).$$
\end{thm}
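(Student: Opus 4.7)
By rescaling I reduce to $R = 1$. The strategy is to iterate Proposition~\ref{onestep} at dyadic scales $\theta^k$, with $\alpha' \in (\alpha, 1)$ fixed so that the geometric contraction factor $\theta^{2\alpha'}$ beats the H\"older rate $\theta^{2\alpha}$ that I want to propagate. Each iteration produces a symmetric $B_{k+1} \in SL_n(\mathbb{R})$ and a vector $b_{k+1}$, hence a rescaled triple $(\rho_0^{(k+1)}, \rho_1^{(k+1)}, T^{(k+1)})$ at scale $\theta^{k+1}$. I propagate, for a suitable universal constant $C^\ast$, the inductive bound
\begin{equation*}
\mathcal{E}^{(k)} + \|\rho_0^{(k)}\|_{p,\theta^k}^2 + \|\rho_1^{(k)}\|_{p,\theta^k}^2 \;\le\; C^\ast\, \theta^{2k\alpha}\, \epsilon_1.
\end{equation*}
Proposition~\ref{onestep} then directly delivers $\mathcal{E}^{(k+1)} \le \theta^{2\alpha'}\mathcal{E}^{(k)} + C_\theta(\|\rho_0^{(k)}\|_{p,\theta^k}^2 + \|\rho_1^{(k)}\|_{p,\theta^k}^2) \le C^\ast\, \theta^{2(k+1)\alpha}\epsilon_1$ once the density terms are under control; the heart of the argument is their control.

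The source density I bound directly via hypothesis~\eqref{decay}. Each $B_j = \mathrm{Id} + O(\sqrt{\epsilon_1})$, so the composed change of coordinates stays uniformly close to the identity, and the ball $B_{\theta^k}$ in the $k$-th frame pulls back to a set comparable to $B_{\theta^k}$ in the original frame. A change of variables then gives $\|\rho_0^{(k)}\|_{p,\theta^k}^2 \lesssim \|\rho_0\|_{p, 2\theta^k}^2 \lesssim \epsilon_1\, \theta^{2k\alpha}$. The target density is the genuinely delicate part, since $\rho_1 = \chi_E$ admits no pointwise H\"older decay. Here I use $B_{1/2} \subset E$ together with summability of the affine shifts: Proposition~\ref{onestep} yields $|b_{k+1}| \lesssim \theta^k\, (\mathcal{E}^{(k)} + \|\rho_0^{(k)}\|_{p,\theta^k}^2 + \|\rho_1^{(k)}\|_{p,\theta^k}^2)^{1/2} \lesssim \theta^{k(1+\alpha)}\sqrt{\epsilon_1}$, so the cumulative origin-shift $c_k = b_1 + B_1^{-1}b_2 + \cdots$ in the original frame satisfies $|c_k| \le C\sqrt{\epsilon_1} \le 1/4$ for $\epsilon_1$ small. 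Consequently, for $k \ge k_0(\theta)$ the pulled-back ball lies in $B_{1/2} \subset E$, forcing $\rho_1^{(k)} \equiv 1$ there and hence $\|\rho_1^{(k)}\|_{p,\theta^k} = 0$. The finitely many initial scales $k < k_0$ contribute $O(\epsilon_1)$, which fits into the inductive bound once $C^\ast$ is chosen large enough (using the original smallness $\|\rho_1\|_{p,1}^2 \le \epsilon_1$ and that the coordinate changes so far are uniformly close to the identity).

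Once the induction is closed, the geometric decay $\mathcal{E}^{(k)} \lesssim \theta^{2k\alpha}\epsilon_1$ translates, after unwinding the compound change of coordinates into an affine map $\widetilde T^{(k)}(x) = A^{(k)} x + c^{(k)}$, into
\begin{equation*}
\fint_{B_r}|T - (A^{(k)}x + c^{(k)})|^2 \;\lesssim\; r^{2(1+\alpha)}\,\epsilon_1
\end{equation*}
at every dyadic scale $r = \theta^k$. Passage to arbitrary $r \in (\theta^{k+1}, \theta^k)$ costs only a universal factor via the trivial inclusion $B_r \subset B_{\theta^k}$ and the volume ratio $|B_{\theta^k}|/|B_r| \le \theta^{-n}$. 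The main obstacle is indeed the target density: H\"older smallness does not propagate through the iteration the way~\eqref{decay} does for $\rho_0$, so one must instead use the geometric assumption $B_{1/2} \subset E$ and exploit summability of the compound affine shifts to certify that past a bounded scale the new $\rho_1^{(k)}$ is identically $1$ on the ball being measured. Once that bookkeeping is made precise, the rest is a standard dyadic iteration.
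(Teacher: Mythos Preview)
Your plan is essentially the paper's proof: iterate Proposition~\ref{onestep}, control $\|\rho_0^{(k)}\|_{p,\theta^k}$ via the decay hypothesis after pulling back through the near-identity composite $B_1\cdots B_k$, and kill $\|\rho_1^{(k)}\|_{p,\theta^k}$ by showing the pulled-back ball lands inside $B_{1/2}\subset E$ using summability of the shifts $b_j$; the paper carries this out by strong induction with explicitly tracked constants. One correction: your threshold $k_0(\theta)$ is superfluous---with $\theta<\tfrac{1}{8}$ and $\epsilon_1$ small the containment already holds at $k=1$, so $\|\rho_1^{(k)}\|_{p,\theta^k}=0$ for \emph{every} $k\ge 1$; be aware that your fallback for $k<k_0$ (bounding by $O(\epsilon_1)$ via $\|\rho_1\|_{p,1}^2\le\epsilon_1$) would not actually work as written, since a global $L^p$ average over $B_1$ does not control the average of $|\chi_E-1|^p$ over an arbitrary small sub-ball.
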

\begin{proof}
Without loss of generality assume $R=1$.
Fix $\alpha'>\alpha$. Fix $0<\theta<\frac{1}{8}$. Define $\mathcal{E}_0=\mathcal{E}(\rho_0,\rho_1,T,1)$ and $\rho_i^0=\rho_i$ for $i=1,2$. Let $\epsilon$ be as in Proposition \ref{onestep}.\\

Applying Proposition \ref{onestep}, we know that there exists a symmetric matrix $B_1\in SL_{n}(\mathbb{R})$ and a vector $b_1$ such  that if we let $T_1(x):=B_1\left(T(B_1x)-b_1\right)$, $\rho_0^1(x):=\rho_0(B_1x)$ and $\rho_1^1(y)=\rho_1(B_1^{-1}y+b_1)$ then there exists $C_0>1$ satisfying

$$\mathcal{E}_1:=\mathcal{E}(\rho_0^0,\rho_1^0,T_1,\theta)\leq \theta^{2\alpha'}\mathcal{E}_0+C_0 p_0^2,$$
where $p_0^2=||\rho_0^0||^2_{p,1}+||\rho_1^0||_{p,1}^2$ .
Moreover $|B_1-Id|^2+|b_1|^2\leq C_0\left( \mathcal{E}_0+p_0^2\right)$.

Assuming we can keep applying Proposition \ref{onestep} repetitively, let $\rho_0^k$ and $\rho_1^k$ be the new source and target densities respectively after applying Proposition \ref{onestep} k times, that is, $\rho_0^k(x):=\rho_0^{k-1}(B_kx)$ and $\rho_1^{k}(x)=\rho_1^{k-1}\left(B_k^{-1}x+b_k\right),$ where $B_k\in SL_n(\mathbb{R})$ is symmetric and $b_k\in \mathbb{R}^n$. Let $T_k$ be the optimal transport map from $\rho_0^k$ and $\rho_1^k$. For all $k=0,1,\cdots$, define $\mathcal{E}_k:=\mathcal{E}(\rho_0^k,\rho_1^k, T_k, \theta^k)$ and for all $k=1,2,\cdots$, define $\Tilde{B_k}=|B_k-Id|$ and $p_k^2=||\rho_0^k||^2_{p,\theta^k}+||\rho_1^k||^2_{p,\theta^k}$. But we cannot necessarily apply Proposition \ref{onestep} as many times as we want. We need to justify that. Using Proposition \ref{onestep}, we know that there exists $\epsilon>0$ and $0<\theta<\frac{1}{8}$ such that for all $k\geq 1$ if we have 
\begin{equation}\label{iterationhypothesis}
    \begin{cases}
     \mathcal{E}_{k-1}+p_{k-1}^2\leq \epsilon\\
      B_1^{-1}(B_2^{-1}(\cdots B_{k-1}^{-1}(B_{k}^{-1}(B_{\theta^k})+b_k)+\cdots b_2)+b_1\subset B_\frac{1}{2},
    \end{cases}
\end{equation}
then 
\begin{equation}\label{iterationconclusion}
    \begin{cases}
     \mathcal{E}_{k}\leq \theta^{2\alpha'}\mathcal{E}_{k-1}+C_0p_{k-1}^2,\\
      p_k^2\leq C_0\theta^{2k\alpha}\epsilon_1\left(1+\Sigma_{i=1}^k\Tilde{B_i}\right)^4,\\
      \Tilde{B_k}^2\leq C_0\left(\mathcal{E}_{k-1}+p_{k-1}^2\right),\\
      b_k^2\leq C_0\theta^{2(k-1)}\left(\mathcal{E}_{k-1}+p_{k-1}^2\right).
    \end{cases}
\end{equation}
for some fixed constant $C_0>1$. The second condition of \eqref{iterationhypothesis} is important to justify $||\rho_1^k||_{p,\theta^k}=0$. Using definition of $\rho_1^i$ for $i=1,2,\cdots, k$ and using change of variables repetitively, we get 
\begin{equation*}
\begin{aligned}
||\rho_1^k||_{p,\theta^{k}}&=\left(\frac{1}{|B_{\theta^k}|}\int_{B_{\theta^{k}}}|\rho_1^{k-1}(B_k^{-1}x+b_k)-1|^pdx\right)^\frac{1}{p}\\
&= \left(\frac{1}{|B_{\theta^k}|}\int_{B_k^{-1}(B_{\theta^k})+b_k} |\rho_1^{k-1}(x)-1|^p dx\right)^\frac{1}{p}\\
&\vdots\\
&=\left(\frac{1}{|B_{\theta^k}|}\int_{B_{1}^{-1}(B_{2}^{-1}\cdots(B_{k-1}^{-1}(B_k^{-1}(B_{\theta^k})+b_k)+b_{k-1})+\cdots)b_2)+b_1}|\rho_1(x)-1|^pdx\right)^\frac{1}{p}\\
&=0.
\end{aligned}
\end{equation*}
In the last step above, we use condition 2 from \eqref{iterationhypothesis} and $\rho_1=1$ in $B_\frac{1}{2}$. Using this, we justify second condition in \eqref{iterationconclusion}, which was not obvious from Proposition \ref{onestep}:\\

\begin{equation*}
\begin{aligned}
p_k:=||\rho_0^k||_{p,\theta^k}&=\left(\frac{1}{|B_{\theta^k}|}\int_{B_{\theta^k}}|\rho_0^{k-1}(B_k x)-1|^p dx\right)^\frac{1}{p}\\
&=\left(\frac{1}{|B_{\theta^k}|}\int_{B_k(B_{\theta^k})}|\rho_0^{k-1}(x)-1|^p dx\right)^\frac{1}{p}\\
& \vdots\\
&=\left(\frac{1}{|B_{\theta^k}|}\int_{B_1(B_2(\cdots (B_k(B_{\theta^k}))\cdots)}|\rho_0(x)-1|^p dx\right)^\frac{1}{p}.\\
\end{aligned}
\end{equation*}
For a linear map $A$, we know $A(B_r)\subset B_{r(1+|A-Id|)}$. Using this we get,
$$B_1(B_2(\cdots (B_k(B_{\theta^k}))\cdots)\subset B_{\theta^kr_k},$$
where $r_k:=(1+|B_1-Id|)(1+|B_2-Id|)\cdots (1+|B_k-Id|).$
Thus, 
\begin{equation*}
\begin{aligned}
p_k&\leq \left(\frac{1}{|B_{\theta^k}|}\int_{B_{\theta^kr_k}}|\rho_0(x)-1|^p dx\right)^\frac{1}{p}\\
&=\left(\frac{|B_{\theta^kr_k}|}{|B_{\theta^k}|}\fint_{B_{\theta^kr_k}}|\rho_0-1|^p\right)^\frac{1}{p}\\
&\leq \Pi_{i=1}^k\left(1+|B_i-Id|\right)^{\frac{n}{p}+\alpha}\sqrt{\epsilon_1}\theta^{k\alpha}.
\end{aligned}
\end{equation*}
As $p>n$ and $0<\alpha<1$, we have 
$$p_k^2\leq \Pi_{i=1}^k\left(1+\Tilde{B_i}\right)^{4}\epsilon_1\theta^{2k\alpha}. $$
We claim that if we let 
\begin{equation}
\begin{aligned}
     C_2&=C_0(8+\epsilon)\\
     C_1&=1+C_0C_2\theta^{-2\alpha}\sum_{i=1}^{\infty} \theta^{2i(\alpha'-\alpha)}\\
     C_3=C_4&=C_0(C_1+C_2)\theta^{-2\alpha}
\end{aligned}
\end{equation}
and provided 
\begin{equation}
\left[(C_1+C_2)+8C_3^2\left(\sum_1^\infty \theta^{i\alpha}\right)^4+\sqrt{C_4}\sum_1^\infty \theta^{i\alpha}\right]\sqrt{\epsilon_1}<\epsilon<\frac{1}{8},
\end{equation}
then we have 
\begin{equation}\label{geometricdecay}
\begin{aligned}
\mathcal{E}_k&\leq C_1\epsilon_1\theta^{2k\alpha}\\
p_k^2&\leq C_2\epsilon_1\theta^{2k\alpha}\\
\Tilde{B}_{k+1}^2&\leq C_3\epsilon_1\theta^{2k\alpha}\\
b_{k+1}^2&\leq C_4\epsilon_1\theta^{2k\alpha},
\end{aligned}
\end{equation}
for all $k$.
To prove this claim, we use strong induction. Our hypothesis in this theorem says $$\mathcal{E}_0+p_0^2\leq \epsilon_1.$$ Thus we can apply Proposition \ref{onestep} to get $\Tilde{B_1}^2\leq C_0\epsilon_1<C_3\epsilon_1$ and $b_1^2\leq C_0\epsilon_1<C_4\epsilon_1$.
Assume \eqref{geometricdecay} is true for $k\leq N$. We show that under this assumption we can apply \eqref{iterationconclusion} for $k=N+1$. We need to check \eqref{iterationhypothesis} for $k=N+1$. Firstly, 
\begin{equation*}
\begin{aligned}
\mathcal{E}_N+p_N^2& \leq C_1\epsilon_1\theta^{2N\alpha}+C_2\epsilon_1\theta^{2N\alpha}\\
&\leq (C_1+C_2)\epsilon_1\\
&\leq \epsilon.
\end{aligned}
\end{equation*}
Next, we want to show 
$$B_1^{-1}(B_2^{-1}(\cdots B_{N}^{-1}(B_{N+1}^{-1}(B_{\theta^{N+1}})+b_{N+1})+\cdots b_2)+b_1\subset B_\frac{1}{2}.$$
It is enough to justify 
$$|b_1|+|B_1^{-1}||b_2|+|B_1^{-1}||B_2^{-1}||b_3|+\cdots+|B_1^{-1}||B_2^{-1}|\cdots|B_N^{-1}||b_{N+1}|$$
$$+|B_1^{-1}||B_2^{-1}|\cdots|B_{N+1}^{-1}|\theta^{N+1}\leq\frac{1}{2}.$$
Now, use $|B_i^{-1}|\leq 1+2\Tilde{B_i}$ to get  
\begin{equation*}
\begin{aligned}
&|b_1|+|B_1^{-1}||b_2|+|B_1^{-1}||B_2^{-1}||b_3|+\cdots+|B_1^{-1}||B_2^{-1}|\cdots|B_N^{-1}||b_{N+1}|\\&+|B_1^{-1}||B_2^{-1}|\cdots|B_{N+1}^{-1}|\theta^{N+1}\\
&\leq|b_1|+(1+2\Tilde{B_1})|b_2|+\cdots +(1+2\Tilde{B_1})\cdots (1+2\Tilde{B_N})|b_{N+1}|\\&+(1+2\Tilde{B_1})\cdots (1+2\Tilde{B_{N+1}})\theta^{N+1}\\
&\leq \sum_{i=1}^{N+1}|b_i|+4\sum_{i=1}^{N}\Tilde{B_i}\sum_{1}^{N+1}|b_j|+\theta^{N+1}+4\sum_{1}^{N+1}\Tilde{B_i}\theta^{N+1}.        
\end{aligned}
\end{equation*}
Recall we have \eqref{geometricdecay} for $k\leq N$. We use this to show each of the four term above is smaller than $\frac{1}{8}$:
$$\sum_1^{N+1}|b_i|\leq \sqrt{C_4\epsilon_1}\sum_1^{N+1}\theta^{i\alpha}\leq \epsilon<\frac{1}{8},$$
$$4\sum_{i=1}^{N}\Tilde{B_i}\sum_{1}^{N+1}|b_j|\leq 4\cdot \frac{1}{8}\sqrt{C_3\epsilon_1}\sum_1^N \theta^{i\alpha}\leq \frac{1}{2}\epsilon<\frac{1}{16},$$
$$\theta^{N+1}<\frac{1}{8},$$ and
$$4\sum_{1}^{N+1}\Tilde{B_i}\theta^{N+1}<4\epsilon\frac{1}{8}<\frac{1}{8}.$$
This shows we can justify the second condition of \eqref{iterationhypothesis} for $k=N+1$. This means we can apply \eqref{iterationconclusion} for $k\leq N+1$.\\
Now, let $\alpha'=\alpha+\delta$ for some $\delta>0$. Then,
\begin{equation*}
\begin{aligned}
\mathcal{E}_{N+1}&\leq \theta^{2\alpha'}\mathcal{E}_{N}+C_0 p_N^2\\
&\leq \theta^{2\alpha'}\left(\theta^{2\alpha'}\mathcal{E}_{N-1}+C_0p_{N-1}^2\right)+C_0 p_N^2\\
&\leq \theta^{4\alpha'}\mathcal{E}_{N-1}+C_0(P_N^2+\theta^{2\alpha'}P_{N-1}^2)\\
&\vdots\\
&\leq \theta^{2(N+1)\alpha'}\mathcal{E}_0+C_0\sum_{0}^{N}\theta^{2(N-i)\alpha'}p_i^2\\
&\leq \theta^{2(N+1)\alpha}\epsilon_1+C_0\sum_{0}^{N}\theta^{2(N-i)(\alpha+\delta)}C_2\epsilon_1\theta^{2i\alpha}\\
&\leq \epsilon_1\theta^{2(N+1)\alpha}+C_0C_2\epsilon_1\theta^{2N\alpha}\sum_0^N\theta^{2(N-i)\delta}\\
&\leq \epsilon_1\theta^{2(N+1)\alpha}\left(1+C_0C_2\theta^{-2\alpha}\sum_0^\infty\theta^{2i(\alpha'-\alpha)}\right),\\
&=C_1\epsilon_1\theta^{2(N+1)\alpha}.
\end{aligned}
\end{equation*}
where we use $\alpha<\alpha'$ and the first inequality in \eqref{iterationconclusion} for $k=1,2,\cdots, N+1$ repetitively. We have also used $p_i^2\leq C_2\epsilon_1\theta^{2i\alpha}$ for $i=1, \cdots,N$.\\
Next, using the second condition of \eqref{iterationconclusion} for $k=N+1$ and \eqref{geometricdecay} for $k=1,\cdots, N$, we get
\begin{equation*}
\begin{aligned}
p_{N+1}^2&\leq C_0\theta^{2(N+1)\alpha}\epsilon_1\left(1+\sum_{0}^{N+1}\Tilde{B_j}\right)^4\\
&\leq C_0\theta^{2(N+1)\alpha}\epsilon_1\left(1+\sqrt{C_3\epsilon_1}\sum_0^{N+1}\theta^{j\alpha}\right)^4\\
&\leq 8C_0\theta^{2(N+1)\alpha}\epsilon_1\left(1+C_3^2\epsilon_1^2\left(\sum_0^\infty\theta^{j\alpha} \right)^4\right)\\
&\leq C_0(8+\epsilon)\epsilon_1\theta^{2(N+1)\alpha}\\
&=C_2\epsilon_1\theta^{2(N+1)\alpha}.
\end{aligned}
\end{equation*}
We now want to show geometric decay for $\Tilde{B}_{N+2}^2$ and $|b_{N+2}|^2$. But we cannot yet apply the third or fourth inequality in  \eqref{iterationconclusion} for $k=N+1$. We already have established $\mathcal{E}_{N+1}\leq C_1\epsilon_1\theta^{2(N+1)\alpha}$ and $p_{N+1}^2\leq C_2 \epsilon_1\theta^{2(N+1)\alpha}$. Thus, 
\begin{equation*}
\begin{aligned}
\mathcal{E}_{N+1}+p_{N+1}^2& \leq (C_1+C_2)\epsilon_1\theta^{2(N+1)\alpha}\\
&\leq \epsilon.
\end{aligned}
\end{equation*}
Now we can apply Proposition \ref{onestep}, to get
\begin{equation*}
\begin{aligned}
\Tilde{B}_{N+2}^2&\leq C_0\left( \mathcal{E}_{N+1}+p_{N+1}^2\right)\\
&\leq C_0 (C_1+C_2)\theta^{-2\alpha}\epsilon_1\theta^{2(N+2)\alpha}\\
&=C_3\epsilon_1\theta^{2(N+2)\alpha}.
\end{aligned}
\end{equation*}
Similarly, 
\begin{equation*}
\begin{aligned}
|b_{N+2}|^2&\leq \theta^{2(N+1)}C_0\left( \mathcal{E}_{N+1}+p_{N+1}^2\right)\\
&\leq C_0 (C_1+C_2)\theta^{-2\alpha}\epsilon_1\theta^{2(N+2)\alpha}\\
&=C_4\epsilon_1\theta^{2(N+2)\alpha}.
\end{aligned}
\end{equation*}
Thus we have shown \eqref{geometricdecay} holds for $k\leq N+1$. This gives $\mathcal{E}_k\leq C_1\epsilon_1\theta^{2k\alpha}$ for all $k$.

\begin{equation*}
\begin{aligned}
    |B_kB_{k-1}\cdots B_1-Id|&=\left|\left((B_k-Id)+Id\right)\left((B_{k-1}-Id)+Id\right)\cdots \left((B_{1}-Id)+Id\right)-Id\right|\\
    &\leq \Pi_{1}^k(1+\tilde{B}_i)-1\\
    &\leq 4\sum_1^k \tilde{B}_i.
\end{aligned}
\end{equation*}

Now using this and geometric decay of energy $\mathcal{E}_k$ we are done (details follow similarly to the argument after equation (3.55) in \cite{GO}).

\end{proof}

\section{Proof of Main Theorem}
From Theorem \ref{epsilon}, we have a good linear approximation of $T$ in $B_{\frac{1}{2}}$. Recall that $T=\nabla u$ for some convex function $u$. Thus there exists a linear function $L$ such that for $r<\frac{1}{2}$, we have 
$$\fint_{B_r}|\nabla u-L|^2\lesssim r^{2(1+\alpha)}.$$ 
Use the Poincaré inequality to get a quadratic polynomial $Q$ satisfying $\fint_{B_r}|u-Q|^2\lesssim r^{2(2+\alpha)}$ for all $r<\frac{1}{2}$. Up to an affine transformation, $Q(x)=\frac{|x|^2}{2}$. Define $\Tilde{u}(x):=\frac{u(rx)}{r^2}$ to get $$\fint_{B_1}|\Tilde{u}(x)-Q(x)|^2\lesssim r^{2\alpha}.$$ From here, we aim to derive an $L^\infty$ estimate. Define $\delta>0$, such that $\delta^2=r^{2\alpha}$. We now apply Lemma \ref{pointwise} to get $C^{2,\frac{4\alpha}{4+n}}$ regularity for $u$ at the origin. Recalling that $\mu = \frac{4}{4+n}$ we have
$$| \Tilde{u}-Q|\lesssim \delta^\mu=r^{\mu\alpha}\quad\text{in}\; B_\frac{1}{2}$$
$$\implies \left|\frac{u(rx)}{r^2}-\frac{Q(rx)}{r^2}\right|^2\lesssim r^{2\mu\alpha} \quad\text{in}\; B_\frac{1}{2}$$
$$\implies |u-Q| \lesssim r^{2+\alpha\mu} \quad \text{in}\, B_{r/2}.$$
This means $C^{2,\frac{4\alpha}{4+n}}$ regularity at the origin. This completes the proof of Theorem \ref{Main}.

\section{Sharpness}\label{sharp}
Here we will show that we cannot improve the $L^\infty$ estimate in Theorem \ref{Main} starting from the $L^2$ estimate we have. We demonstrate this via an explicit example. Let $Q = |x|^2/2$. We construct a function that satisfies 
\begin{equation}\label{ca1}
\fint_{B_r}|u-Q|^2\lesssim r^{2(2+\alpha)}\quad\text{in}\; B_r,\, 0 < r < 1
\end{equation}
but $||u-Q||_{L^\infty(B_\frac{r}{2})}\lesssim r^{2+\beta},\, r \in (0,\,1)$ is not true for any $\beta>\frac{4\alpha}{4+n}.$\\
Now construct $u$ as below:\\
For $k \geq 0$, inside $B_{2^{-k}}(0),$ replace $Q$ by a linear map in a small ball $B_k$ of radius $r_k := 2^{-k\left(1+\frac{2\alpha}{n+4}\right)}/100$ tangent to $\partial B_{2^{-k}}$. These linear parts won't intersect each other due to the size of the portions on which we are replacing $Q$ by planes. It is straightforward to check that $\int_{B_k} |u-Q|^2 \lesssim r_k^{4+n}$, hence
$$\fint_{B_{2^{-N}}} |u-Q|^2 \lesssim 2^{nN} \sum_{k=n}^{\infty} r_k^{4+n} \lesssim \left(2^{-N}\right)^{2(2+\alpha)}$$
for all $N$. Inequality (\ref{ca1}) follows.\\
On the other hand, in $B_k,$ we have $\max (u-Q) = r_k^2/2$, thus
$$\|u-Q\|_{L^{\infty}(B_{2^{-N-1}})} \geq r_{N+1}^2/2 = c(n,\,\alpha) (2^{-N})^{2+\frac{4\alpha}{4+n}}$$
for all $N$. The right-hand side clearly dominates $(2^{-N})^{(2+\beta)}$ for any $\beta > \frac{4\alpha}{4+n}.$ This shows that we cannot get a better $L^\infty$ estimate.\\



\section*{Acknowledgments}
The author would like to thank Prof. Connor Mooney for his supervision and feedback on the paper. The author was supported by the Sloan Fellowship, UC Irvine Chancellor's Fellowship, and NSF CAREER Grant DMS-2143668 of C. Mooney.

\end{document}